\theoremstyle{definition}
\newtheorem{theorem}{Theorem}[section]
\newtheorem{proposition}[theorem]{Proposition}
\newtheorem{definition}[theorem]{Definition}
\newtheorem{corollary}[theorem]{Corollary}
\newtheorem{lemma}[theorem]{Lemma}
\newtheorem{remark}[theorem]{Remark}
\newtheorem{example}[theorem]{Example}
\newcommand{\pderiv}[2][\!]{\frac{\partial^{#1}}{\partial {#2}^{#1}}}
\title[Polynomial-Value Sieving and Recursively-Factorable Polynomials]{Polynomial-Value Sieving and \\ Recursively-Factorable Polynomials}
\author{Jonathan Burns}
\address{Department of Mathematics \& Statistics\\ University of South Florida\\ 4202 Fowler Ave.\\ Tampa, FL 33620-5700, USA}
\email{jtburns@mail.usf.edu}
\begin{document}

\maketitle


\begin{abstract}
We identify a recursive structure among factorizations of polynomial values into two integer factors. 
Polynomials for which this recursive structure characterizes all non-trivial representations of 
integer factorizations of the polynomial values into two parts are here called recursively-factorable polynomials. 
In particular, we prove that $n^2+1$ and the prime-producing polynomials $n^2+n+41$ and $2n^2+ 29$ are 
recursively-factorable. 

For quadratics, the we prove that this recursive structure is equivalent to a Diophantine identity involving the 
product of two binary quadratic forms. We show that this identity may be transformed into geometric terms, 
relating each integer factorization $a n^2+b n+c = p \, q$  to a lattice point of the conic section 
$a X^2 + b XY+cY^2+X-nY=0$, and vice versa.
\end{abstract}


\section{Introduction} \label{sec:intro}	

The sieve of Eratosthenes is the oldest and most well-known of the integer sieves, and is used to find all the primes
up to a given limit $N$. The sieve begins with the list of integers $L=(2, 3, \dots, N)$ and proceeds iteratively by marking 
the smallest number on the list as prime and removing it along with its multiples from the list. The smallest number
still left on the list is marked as prime and the procedure continues until the list is empty.

Algorithmically, the sieve of Eratosthenes both identifies the prime numbers in the list and yields a unique prime factorization
for the composite numbers through multiple presentations of each polynomial value as product of two integers. In other words,
each value $F(n)=n$ in the sequence $L=(F(2),F(3),\dots,F(N))$ is presented as the factorization presentation $F(n)=p \, q$ 
for each $p \mid F(n)$. If however $F$ is an arbitrary polynomial with integer coefficients and $p \mid F(n)$, then 
$p \mid F(n+ k \, p)$ for each $k \in \mathbb{Z}$ too. Hence, the algorithm can be generalized to include other polynomials 
at the cost of missing some of the factorization presentations. Fortunately, the situation can be improved by taking both factors 
of each composite $F(n)$ into consideration, i.e., if $F(n)= p \, q$ is marked as being divisible by $p$ then all $F(n + k q)$ where 
$k \in \mathbb{Z}$ can be marked as being divisible by $q$ as well. 

To keep track of all the factorization presentations, it suffices to record the initial value along with the sequence of quotients 
for the multiples of the factors, e.g., if $F(x_1) = 1 \cdot p_1$, $F(x_1 + x_2 p_1) = p_1 \, p_2$ and 
$F(x_1 + x_2 p_1 + x_3 p_2) = p_2 \, p_3$ then the factorization presentation can be reconstructed from the sequence 
$(x_1, x_2, x_3)$. This method of sieving the polynomial values for integer factorizations is expressed in Theorem \ref{main_recurrence},
and holds in the context of multivariate polynomials as well. Section \ref{sec:recursively-factorable} introduces a family of
polynomials called {\it recursively-factorable polynomials} for which the collection of factorization presentations corresponding 
to the sequences $\{(x_1,\dots,x_m) \in \mathbb{Z}^m\}_{m=1}^{\infty}$ yield the unique prime factorization for each 
value of $F$ via presentations $F(n)=p \, q$ for each $p \mid F(n)$. 

In general, recursively-factorable polynomials are rare, but there are some noteworthy instances. Particularly, the {\it Euler-like} 
and {\it Legendre-like} prime producing polynomials of the form $n^2+n+c$ for $c \in \{2,3,5,11,17,41\}$ and $2n^2 + c$ for 
$c = \{3,5,11,29\}$, respectively, and Landau's $n^2+1$ are recursively-factorable. The sieve of Eratosthenes verifies that the 
line $n$ is also recursively-factorable, but we presently focus on recursively-factorable quadratic equations. 

In Section \ref{sec:quad_dio}, we introduce an identity which presents the factorization of a quadratic polynomial value as the product
of two binary quadratic forms (Theorem \ref{thm:dio_main}) and show that this identity associates all the factorization presentations
of the aforementioned polynomial-value sieving integer sequences with the set 
$\Gamma_a := \left\{ \left. \begin{pmatrix}\alpha & \beta \\ \gamma & \delta \end{pmatrix} \in M_2(\mathbb{Z}) \, \right| \alpha \delta - a \, \beta \gamma = 1 \right\}$.
For monic quadratics, $a=1$ and the factorization presentations correspond to the transvection generators of 
$\Gamma_1 = \mbox{SL}_2(\mathbb{Z})$ (Corollary \ref{cor:transvections}).

In Section \ref{sec:lattice_conic}, a bijection is established (Theorem \ref{thm:conic_bijection}) between $\Gamma_a$ 
and the set $\mathcal{L}_a$ of lattice point solutions $(X,Y) \in \mathbb{Z}^2$ for the conic sections 
$a \, X^2 + b \, X Y + c \, Y^2 + X - n Y = 0$ with $a, b, c, n \in \mathbb{Z}$, showing that $\mathcal{L}_a$
does not depend on $b$, $c$, or $n$. Following the mappings in Figure \ref{fig:outline}, each lattice point $(X,Y)$
of the conic section is associated with an element of $\Gamma_a$ and gives a factorization presentation for 
$F(n)=a n^2+b n+c$. If a factorization presentation $F(n)=p \, q$ has a corresponding integer sequence $(x_1,\dots,x_m)$ then
there is a matching element of $\Gamma_a$ which corresponds to a lattice point solution of the conic section.

\begin{figure}[h]
\begin{tikzpicture}
\matrix (magic) [matrix of nodes,ampersand replacement=\&, column sep=3.5em]
{ 
$F(n)=p \, q$ \& \& \\
 \& $\begin{pmatrix} \alpha & \beta \\ \gamma & \delta \end{pmatrix} \in \Gamma_a$ \&  $(X,Y)\in \mathcal{L}_a$ \\ 
$(x_1,\dots,x_m)$ \& \& \\
};
\path[->]
(magic-2-2) edge [bend left=15] node[auto] {\scriptsize Thm. \ref{thm:conic_bijection}} (magic-2-3)
(magic-2-3) edge [bend left=15] node[auto] {\scriptsize Thm. \ref{thm:conic_bijection}} (magic-2-2)
(magic-2-2) edge [bend right=15] node[above] {\hspace{1.5em} \scriptsize Thm. \ref{thm:dio_main}} (magic-1-1)
(magic-3-1) edge [bend right=15] node[below] {\hspace{1.5em} \scriptsize Thm. \ref{thm:dio_quad}} (magic-2-2)
(magic-3-1) edge [bend left=15] node[auto]  {\scriptsize Thm. \ref{main_recurrence}} (magic-1-1);
\path[->,loosely dashed] (magic-1-1) edge [bend left=15] node[auto] {\scriptsize Thm. \ref{thm:all_factors}} (magic-3-1);
\end{tikzpicture}
\caption{Relationships between factorization presentations $a n^2+bn+c=p\, q$, the polynomial-value sieving sequence $(x_1,\dots,x_m)$, 
the set of $2 \times 2$ integers matrices $\Gamma_a$, and the set of lattice point solutions $\mathcal{L}_a$ to the conic section $a X^2+b X Y+cY^2+X-nY=0$.}
\label{fig:outline}
\end{figure}
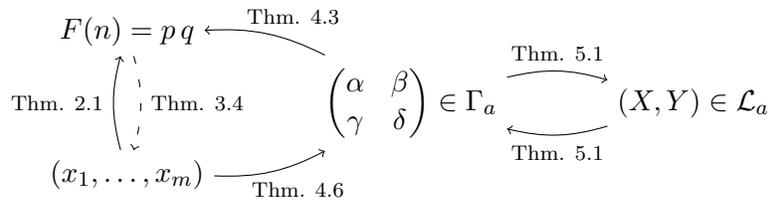

\section{Polynomial-Value Sieving} \label{sec:poly-value}	

\begin{theorem} \label{main_recurrence}
Let $\mathcal{R}$ be a commutative ring with identity. For any polynomial $F \in \mathcal{R}[x]$ of degree $d$, there exists a sequence of 
multivariate polynomials $\{f_m (x_1,\dots,x_m)\}_{m=0}^{\infty}$ such that $f_m (x_1, \dots, x_m) \in \mathcal{R}[x_1,\dots,x_m]$ and
\begin{equation} \label{gen_factor}
F \left( \sum\limits_{k=1}^m x_k \, f_{k-1}(x_1,\dots,x_{k-1}) \right) = f_{m-1}(x_1,\dots,x_{m-1}) \, f_m (x_1,\dots,x_m)
\end{equation}
where $f_0 = 1$, $f_1(x_1)=F(x_1)$, and 
$$f_m = f_{m-2} + x_m \sum\limits_{j=1}^d \frac{1}{j!} \left(x_m \, \frac{f_{m-1}}{f_{m-2}}\right)^{j-1} \frac{\partial^j f_{m-1}}{\partial x_{m-1}^j}$$
for $m \ge 2$ with the convention that $f_m$ is shorthand for $f_m(x_1, \dots, x_m)$.
\end{theorem}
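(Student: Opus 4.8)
The plan is to run an induction on $m$, built around the ``running argument'' $S_m := \sum_{k=1}^m x_k\, f_{k-1}$, which obeys the one-step recursion $S_m = S_{m-1} + x_m\, f_{m-1}$ with both $S_{m-1}$ and $f_{m-1}$ free of $x_m$; note also that $S_{m-1} = S_{m-2} + x_{m-1}\, f_{m-2}$ is affine in $x_{m-1}$ with ``slope'' $f_{m-2}$, itself free of $x_{m-1}$. Two formal identities drive the computation: the finite Taylor expansion $F(U+V) = \sum_{j=0}^d F^{[j]}(U)\, V^j$ and the affine chain rule $D^{[j]}_t\, F(a+bt) = b^{\,j}\, F^{[j]}(a+bt)$ for $a,b$ free of $t$. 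Here $F^{[j]}$ denotes the $j$-th Hasse derivative, which agrees with $\tfrac{1}{j!}F^{(j)}$ but carries integer-combination coefficients; reading every $\tfrac{1}{j!}\partial^{\,j}$ in the statement as a Hasse derivative is what lets the argument run over an arbitrary commutative ring. To license the cancellations below I would first carry out the proof over the universal coefficient domain $\mathbb{Z}[a_0,\dots,a_d]$, with $F=\sum_i a_i x^i$, where each $f_m$ is a nonzero polynomial; the general case then follows by specializing the $a_i$ to the coefficients of $F$.

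The base cases are immediate, since $f_0=1$, $f_1=F(x_1)$ and $S_1=x_1$ give $F(S_1)=F(x_1)=f_0 f_1$. For the inductive step I assume the main identity at level $m-1$, namely $F(S_{m-1}) = f_{m-2}\, f_{m-1}$, and apply the Hasse operator $D^{[j]}_{x_{m-1}}$ to both sides. Because $f_{m-2}$ is free of $x_{m-1}$, the left-hand side becomes $f_{m-2}\, D^{[j]}_{x_{m-1}} f_{m-1}$, while the affine chain rule rewrites the right-hand side as $f_{m-2}^{\,j}\, F^{[j]}(S_{m-1})$. Cancelling the nonzero factor $f_{m-2}$ (legitimate in the domain) gives the derivative relation
\begin{equation*}
D^{[j]}_{x_{m-1}} f_{m-1} \;=\; f_{m-2}^{\,j-1}\, F^{[j]}(S_{m-1}), \qquad 1 \le j \le d .
\end{equation*}
Reading the operator $\tfrac{1}{j!}\,\partial^{\,j}_{x_{m-1}}$ as the Hasse derivative $D^{[j]}_{x_{m-1}}$, this makes the division by $f_{m-2}^{\,j-1}$ in the recursion exact, so that each term is a polynomial and
\begin{equation*}
f_m \;=\; f_{m-2} + \sum_{j=1}^d x_m^{\,j}\, f_{m-1}^{\,j-1}\, F^{[j]}(S_{m-1}) .
\end{equation*}

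To close the step I expand $F(S_m)=F(S_{m-1}+x_m f_{m-1})$ by the Taylor formula about $S_{m-1}$, obtaining $F(S_m)=\sum_{j=0}^d F^{[j]}(S_{m-1})\,(x_m f_{m-1})^j$. Splitting off the $j=0$ term as $F(S_{m-1})=f_{m-2}f_{m-1}$ and factoring one copy of $f_{m-1}$ out of the remaining sum yields $F(S_m)=f_{m-1}\bigl(f_{m-2}+\sum_{j=1}^d x_m^{\,j} f_{m-1}^{\,j-1} F^{[j]}(S_{m-1})\bigr)=f_{m-1}f_m$, completing the induction.

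I expect the genuine obstacle to lie not in this two-step Taylor calculation but in the well-definedness of the recursion over a general commutative ring, where the literal formula divides both by $j!$ and by $f_{m-2}^{\,j-1}$. Resolving it is exactly the point of the two devices above: passing to Hasse derivatives removes the division by $j!$, while the derivative relation makes the division by $f_{m-2}^{\,j-1}$ exact---and the detour through the domain $\mathbb{Z}[a_0,\dots,a_d]$ is what justifies cancelling $f_{m-2}$ at all. Over an integral domain, or a field of characteristic zero, these subtleties disappear and only the Taylor computation remains.
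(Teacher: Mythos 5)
Your proposal is correct and follows essentially the same route as the paper: induction on $m$, applying the Hasse derivative $D^{(j)}_{x_{m-1}}$ to $F(S_{m-1})=f_{m-2}f_{m-1}$ to obtain the derivative relation, and then the finite Taylor expansion of $F(S_{m-1}+x_m f_{m-1})$ to define $f_m$ and close the induction. Your extra care about reading $\tfrac{1}{j!}\partial^j$ as a Hasse derivative and justifying the cancellation of $f_{m-2}$ via the universal coefficient ring $\mathbb{Z}[a_0,\dots,a_d]$ only makes explicit what the paper leaves implicit.
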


\begin{proof}
Since $F(x_1 f_0) = f_0 \, f_1(x_1)$ represents the trivial factorization, the statement is initially true and we proceed by induction on $m$.
Let $D^{(j)}$ be the $j$th order Hasse derivative and $D^{(j)}_{x} =\frac{1}{j!} \frac{d^j}{dx^j}$ be the $j$th order Hasse derivative 
with respect to the intermediate $x$. Applying $D^{(j)}_{x_{m-1}}$ to both sides of
$F \left( \sum \limits_{k=1}^{m-1} x_k f_{k-1} \right)=f_{m-2} \, f_{m-1}$ gives
\begin{equation} \label{partial_f}
(D^{(j)} F) \left(\sum\limits_{k=1}^{m-1} x_k f_{k-1} \right) \cdot f_{m-2}^j = f_{m-2} \cdot D^{(j)}_{x_{m-1}} f_{m-1} .
\end{equation}
Using the Taylor series expansion for $F$,
\begin{align}
F\left(\sum\limits_{k=1}^{m} x_k f_{k-1} \right) & = \sum\limits_{j=0}^{d} (D^{(j)} F) \left(\sum\limits_{k=1}^{m-1} x_k f_{k-1}\right) \cdot (x_m f_{m-1})^{j} \notag \\ 
& = F\left( \sum\limits_{k=1}^{m-1} x_k f_{k-1} \right) + (x_m f_{m-1}) \sum\limits_{j=1}^{d} (D^{(j)} F) \left(\sum\limits_{k=1}^{m-1} x_k f_{k-1}\right) \cdot (x_m f_{m-1})^{j-1} \notag \\
& = f_{m-1} \cdot \left(f_{m-2} + x_m \sum\limits_{j=1}^{d} (D^{(j)} F) \left(\sum\limits_{k=1}^{m-1} x_k f_{k-1}\right) \cdot (x_m f_{m-1})^{j-1} \right) \label{hasse_taylor}
\end{align}
which gives a definition for $f_m (x_1,\dots,x_m) \in \mathcal{R} [x_1,\dots, x_m]$. Substituting (\ref{partial_f}) into (\ref{hasse_taylor}) yields
\begin{equation} \label{f_m}
f_m = f_{m-2} + x_m \sum\limits_{j=1}^{d} \left(x_m \frac{f_{m-1}}{f_{m-2}}\right)^{j-1} D^{(j)}_{x_m-1} f_{m-1} \, . \qedhere
\end{equation}
\end{proof}

\medskip

\begin{remark} \label{quad_recurrence}
For $F(z)=\sum\limits_{i=0}^d a_i z^i$, taking $j=d$ in equation (\ref{partial_f}) gives
$$\frac{D^{(d)}_{x_{m-1}} f_{m-1}}{(f_{m-2})^{d-1}} = a_d$$
for all $d \ge 1$. So for $d=2$,  Theorem \ref{main_recurrence} expresses $f_m$ as
\begin{equation}
f_m = f_{m-2} + x_m \frac{\partial f_{m-1}}{\partial x_{m-1}} + a_2 \, x_m^2 f_{m-1} \, .
\end{equation}
\end{remark}

\begin{remark}
For each sequence $(x_1, \dots, x_m)$, if $x_i = x_{i_a} + x_{i_a}$ then
\begin{equation} \label{binary_sequence}
f_m (x_1, x_2, \dots, x_{i-1}, x_i, x_{i+1}, \dots, x_m) = f_{m+2} (x_1, x_2, \dots, x_{i-1}, x_{i_a}, 0, x_{i_b}, x_{i+1} \dots, x_m) \, .
\end{equation}
Moreover if $(x_1, \dots, x_m) \in \mathbb{Z}^m$, then there exists $f_{M}$ such that
$$f_{m} (x_1, \dots, x_{m}) = f_{M} (z_1, \dots, z_{M})$$
where $z_i \in \{ -1,0,1 \}$ and $M = \sum_{j=1}^{m} 2|x_j|-1$.
\end{remark}

\begin{example} 
\label{recur_ex}
Let $F(x)=3x^2+5x+11$. We compute $f_3 (2,-1,4)$ as follows:

\begin{align*}
f_0 & = 1 \\
f_1 (2) & = \frac{F(2 \cdot 1) }{1} = \frac{F(2)}{1} = 33 \\
f_2 (2,-1) & = \frac{F(2 + (- 1) \cdot 33 )) }{33}=\frac{F(-31)}{33} =83 \\
f_3 (2,-1,4) & = \frac{F(-31 + 4 \cdot 83) }{83}=\frac{F(301)}{83} =3293\\
\end{align*}
This gives $F(301) =  273319 = 83 \times 3293$. One can also verify that $$f_3 (2,-1,4) = f_{11} (1,0,1,-1,1,0,1,0,1,0,1).$$
\end{example}

\smallskip

\section{Recursively-Factorable Polynomials}  \label{sec:recursively-factorable} 

Theorem \ref{main_recurrence} provides a means of factoring the values of a polynomial $F$ into two integers, but these presentations 
may not represent the full solution set $\{(n,p,q) \in \mathbb{Z}^3 : F(n) = p \, q \}$. For example when $F(n)=n^2+n+7$, the 
integer factorization $F(1)= 3 \cdot 3$ cannot be presented via Theorem \ref{main_recurrence}, i.e.,  there does not exist a finite 
sequence of integers $(x_1, x_2, \dots, x_m)$ for which $f_m = 3$, $f_{m-1} = 3$, and $\sum_{k=1}^m x_k f_{k-1} = 1$. Proof 
of this fact is shown in Remark \ref{non-recur-fac}.

By contrast, Lemma \ref{lem:recur_exist} provides the existence of a family of polynomials $\mathcal{F}$ for which the prime integer 
factorization of each value of $F \in \mathcal{F}$ can be reconstructed from the presentations of Theorem \ref{main_recurrence}. 
Theorem \ref{thm:all_factors} shows that this family of polynomials contains the recursively-factorable polynomials characterized 
by the following property.

\begin{definition}
Let $F$ be a polynomial with integer coefficients. If for each integer factorization presentation $F(n)=p \, q$ there exists an 
$r \in \mathbb{Z}$ such that $|F(r)|<|F(n)|$ and $r \equiv n \pmod{|p|}$ or $r \equiv n \pmod{|q|}$, then $n$ is said to satisfy
the {\it recursively-factorable criterion} for $F$. If each $n \in \mathbb{Z}$ satisfies the recursively-factorable criterion for $F$,
then the polynomial $F$ is said to be {\it recursively-factorable}.
\end{definition}

\begin{remark}
Recursively-factorable polynomials are irreducible over $\mathbb{Z}$. If not then $F(n)=0$ for some $n \in \mathbb{Z}$, but 
the non-trivial factorization $0 = 0 \cdot p_0$ has no associated $r \equiv n \pmod{|p_0|}$ such that $|F(r)|<|F(n)|=0$ for any 
$p_0 \in \mathbb{Z}$.
\end{remark}

\begin{lemma} \label{lem:shift_still_recursive}
Let $F$ be a polynomial and $G(n)= \pm F(n-h)$ for some $h \in \mathbb{Z}$. If $F$ is recursively-factorable, then so is $G$.
\end{lemma}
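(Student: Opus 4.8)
The plan is to show that the recursively-factorable criterion is preserved under both a horizontal shift $n \mapsto n-h$ and an overall sign change $F \mapsto -F$, since $G(n) = \pm F(n-h)$ is a composition of these two operations. The key observation is that the criterion is stated entirely in terms of absolute values $|F(r)|, |F(n)|$ and congruences modulo $|p|, |q|$, so it is insensitive to the sign of the polynomial and transforms predictably under a translation of the argument.

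First I would handle the sign change. If $G = -F$, then every factorization presentation $G(n) = p\,q$ corresponds to the presentation $F(n) = (-p)\,q$, and one has $|G(n)| = |F(n)|$, $|{-p}| = |p|$, $|q| = |q|$. Since $F$ is recursively-factorable, there is an $r \in \mathbb{Z}$ with $|F(r)| < |F(n)|$ and $r \equiv n \pmod{|p|}$ or $r \equiv n \pmod{|q|}$; the very same $r$ witnesses the criterion for $G$ because $|G(r)| = |F(r)| < |F(n)| = |G(n)|$ and the moduli are unchanged. Hence $-F$ is recursively-factorable whenever $F$ is.

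Next I would handle the translation $G(n) = F(n-h)$. Here a factorization presentation $G(n) = p\,q$ is exactly a presentation $F(n-h) = p\,q$. Writing $n' = n-h$, the recursive-factorability of $F$ supplies an $r' \in \mathbb{Z}$ with $|F(r')| < |F(n')|$ and $r' \equiv n' \pmod{|p|}$ or $r' \equiv n' \pmod{|q|}$. I would then set $r = r' + h$, so that $|G(r)| = |F(r')| < |F(n')| = |G(n)|$, and translate the congruence: $r \equiv n \pmod{|p|}$ iff $r' \equiv n' \pmod{|p|}$ (and likewise for $|q|$), since adding the same constant $h$ to both sides preserves congruences. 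Thus $r$ witnesses the criterion for $G$ at $n$, and as $n$ ranges over $\mathbb{Z}$ so does $n' = n-h$, covering every integer.

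I do not anticipate a genuine obstacle here; the lemma is essentially a bookkeeping verification that the defining data of the criterion (absolute values of polynomial values, and residues modulo $|p|$ and $|q|$) are each manifestly invariant under sign flips and equivariant under integer translation. The only point requiring a small amount of care is confirming that the correspondence between factorization presentations of $F$ and of $G$ is a bijection preserving the relevant moduli — for the sign change one must note that replacing $p$ by $-p$ does not change $|p|$, and for the shift that the factorization data $(p,q)$ is literally unchanged while only the evaluation point is relabeled.
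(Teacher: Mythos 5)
Your proposal is correct and follows essentially the same argument as the paper: given a factorization $G(n)=p\,q$, invoke the criterion for $F$ at $n-h$ to obtain a witness $r$, then translate by $h$ and observe that both the inequality on absolute values and the congruence modulo $|p|$ or $|q|$ are preserved. The paper handles the sign and the shift in a single step rather than splitting them into two cases, but the content is identical.
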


\begin{proof}
Suppose that $G(n)=\pm F(n-h)=p_0 \, p_1$ is a non-trivial factorization. Since $F$ is recursively-factorable, we may assume without loss 
of generality that there exists $q \in \mathbb{Z}$ such that $|F(r)|<|F(n-h)|$ where $r = (n - h) - q \, p_0$. Thus $|G(r+h)|<|G(n)|$
and $r+h = n- q \, p_0 \equiv n \pmod{|p_0|}$, so we may conclude that $G$ is recursively-factorable.
\end{proof}

\begin{theorem}
\label{thm:all_factors}
If $F$ is recursively-factorable then, for each $n\in \mathbb{Z}$ and $p \in \mathbb{N}$ such that $p \mid F(n)$, there exists a finite sequence 
of integers  $(x_1, x_2, \dots, x_m)$ such that 
\begin{equation}
n= \sum_{k=1}^m x_k f_{k-1} (x_1,\dots,x_{k-1}) \qquad \mbox{ and } \qquad p= |f_{m} (x_1,\dots, x_{m-1}, x_{m})|.
\end{equation}
\end{theorem}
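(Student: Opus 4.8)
The plan is to prove the statement by strong induction on $|F(n)|$, using the recursively-factorable criterion to descend to smaller polynomial values and the recurrence of Theorem \ref{main_recurrence} to lift the resulting integer sequence back up. Fix $n \in \mathbb{Z}$ and $p \in \mathbb{N}$ with $p \mid F(n)$, and write $F(n) = p\,q$ where $q = F(n)/p$. If this is the trivial factorization (that is, $p = 1$ or $|p| = |F(n)|$), then a one-term or two-term sequence handles it directly, since $f_1(x_1) = F(x_1)$ and $f_0 = 1$; these form the base of the induction. Otherwise the factorization is non-trivial, and the recursively-factorable criterion supplies an $r \in \mathbb{Z}$ with $|F(r)| < |F(n)|$ and (after relabeling, so that without loss of generality $r \equiv n \pmod{|p|}$) an integer $t$ with $r = n - t\,p$.

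First I would apply the induction hypothesis to the pair $(r, p')$, where $p'$ is chosen so that $p \mid F(r)$ guarantees a usable divisor at the smaller value; concretely, since $r \equiv n \pmod{|p|}$ and $F$ has integer coefficients, we have $p \mid F(r)$, so the induction hypothesis yields a sequence $(x_1, \dots, x_{m-1})$ with $r = \sum_{k=1}^{m-1} x_k\, f_{k-1}(x_1,\dots,x_{k-1})$ and $p = |f_{m-1}(x_1,\dots,x_{m-1})|$. The crux is then to append one more term $x_m$ to realize $n$ itself. Using the recurrence identity \eqref{gen_factor}, appending $x_m$ advances the running argument by $x_m\, f_{m-1}$, so I would set $x_m = \pm t$ (with sign matching that of $f_{m-1}$ relative to $p$) so that $\sum_{k=1}^{m} x_k\, f_{k-1} = r + t\,p = n$. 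By \eqref{gen_factor}, the new terminal value satisfies $f_{m-1}\, f_m = F(n) = p\,q$, and since $|f_{m-1}| = p$ divides $F(n)$, this forces $|f_m| = |q| = |F(n)/p|$; but a short check confirms it is the intended $p$ in the symmetric roles, so after possibly swapping the roles of $p$ and $q$ at the outset we obtain $p = |f_m(x_1,\dots,x_m)|$ exactly.

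The main obstacle I anticipate is bookkeeping the two symmetric cases cleanly: the criterion allows the descent to be witnessed by \emph{either} $p$ or $q$, and correspondingly the target divisor to be realized at the terminal step may be $|f_m|$ or $|f_{m-1}|$. I would resolve this by first reducing to the case $r \equiv n \pmod{|p|}$ through the symmetry $F(n) = p\,q = q\,p$, so that it suffices to produce the sequence realizing the distinguished divisor; the other divisor is then recovered automatically from the product identity. A secondary subtlety is ensuring $p \mid F(r)$ actually holds from $r \equiv n \pmod{|p|}$: this is immediate because any integer polynomial satisfies $F(a) \equiv F(b) \pmod{|p|}$ whenever $a \equiv b \pmod{|p|}$, so $p \mid F(n)$ yields $p \mid F(r)$. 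With the congruence and the divisibility in hand, the induction is well-founded because $|F(r)| < |F(n)|$ strictly decreases at each descent, and the recurrence guarantees each appended step is consistent with the factorization structure, completing the argument.
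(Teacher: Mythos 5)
Your proposal is correct in substance but organized quite differently from the paper's proof. The paper does not induct: it builds the entire descending chain $F(n)=p_1\,p_0$, $F(r_1)=p_2\,p_*$, $F(r_2)=p_3\,p_2,\dots$ in one pass, choosing at each stage the remainder $r_k$ that \emph{minimizes} $|F(r_k)|$ over the admissible congruence classes; that minimality is what forces each new descent to run along the newly created factor $p_{k+1}$ rather than the old one, which is exactly the alternation needed for the reversed chain to match the recursion defining $f_m$. You replace all of that with a strong induction on $|F(n)|$ whose hypothesis quantifies over \emph{all} divisors of $F(r)$; this absorbs the alternation issue automatically, since you may apply the inductive hypothesis to $(r,p)$ no matter how the descent below $r$ behaves, and then append a single term $x_m$ with $x_m f_{m-1}=n-r$. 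That is a genuinely cleaner reduction, at the cost of not exhibiting the explicit chain of factorizations that the paper displays. One point to tighten: the recursively-factorable criterion hands you a descent modulo $|p|$ \emph{or} modulo $|q|$ --- you do not get to choose which --- and whichever factor carries the descent ends up as $|f_{m-1}|$, with the \emph{other} factor appearing as the terminal $|f_m|$. So ``swapping the roles of $p$ and $q$ at the outset'' is not an available move, and ``recovered automatically from the product identity'' is not quite enough, since the theorem demands that $p$ be the \emph{last} value. The correct repair is one line: appending a further term $x_{m+1}=0$ leaves $\sum x_k f_{k-1}$ unchanged while giving $f_{m+1}=f_{m-1}$, which exchanges which factor is terminal. (The paper's own proof silently elides the same case, so this is a shared blemish rather than a flaw peculiar to your argument.) With that addition your induction is complete and well-founded, since a recursively-factorable $F$ never vanishes on $\mathbb{Z}$ and $|F(r)|<|F(n)|$ strictly decreases at each descent.
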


\begin{proof}
Fix $n \in \mathbb{Z}$. If $p=1 \mbox{ or } |F(n)|$ then the sequence $(n)$ gives the presentation 
$F(n) = F(n \cdot f_0) = f_0 \, f_1 (n) = 1 \cdot F(n)$. Thus it is sufficient to consider the case where $F(n)$ 
is a composite integer with a non-trivial factorization $F(n) = p_1 \, p_0$ such that $p=|p_0|$. 

Let $R=\{r \in \mathbb{Z} : r \equiv n \pmod{|p_0|} \mbox{ or } r \equiv n \pmod{|p_1|} \}$. Since $F$ is recursively-factorable, there exists an
$r \in R$ such that $|F(r)|<|F(n)|$. Moreover there is an $r_1 \in R$ such that $|F(r_1)| \le |F(r)|$ for all $r \in R$. Set $p_* = p_0 \mbox{ or } p_1$
so that $r_1 \equiv n \pmod{|p_*|}$. It follows that $n=q_1 \, p_* + r_1$  and $F(r_1)=p_2 \, p_*$ for some $q_1, p_2 \in \mathbb{Z}$. 
If $|p_2| = 1$, then $F(r_1)=p_2 \, p_*$ represents a trivial factorization and the sequence $(r_1,q_1)$ yields the presentation
\begin{equation}
F(n) = F(r_1 \, p_2 + q_1 \, p_*) = f_1 (r_1) \, f_2 (r_1,q_1).
\end{equation}
If $|p_2| \not= 1$, then $F(r_1) = p_* \, p_2$ represents a non-trivial factorization, and by the minimality of our choice of $r_1$ 
relative to all other $r \in R$ there exists an $r_2$ which minimizes $|F(r_2)|<|F(r_1)|$ over all $r_2 \equiv r_1 \pmod{|p_2|}$, i.e.,
$r_1 = q_2 \, p_2 + r_2$ for some $q_2 \in \mathbb{Z}$.

We may continue in this fashion until we obtain the trivial integer factorization $F(r_{m-1}) = p_{m-1} \, p_{m}$ where $|p_m| = 1$, 
produced from a finite sequence of factors $(p^*, p_2, \dots, p_{m-1}, p_{m})$, quotients $(q_1, q_2, \dots, q_{m-1})$ and remainders 
$(r_1, r_2, \dots, r_{m-1})$ such that $r_{k} = q_{k+1} \, p_{k+1} + r_{k+1}$ and $F(r_k) = p_{k} \, p_{k+1}$ for each 
$2 \le k \le m-1$. Starting with $p_{m} = 1$ and $F(r_{m-1}) = p_{m-1} \, p_{m}$ we may reverse this sequence to obtain 
$n$ and $p$ as follows:
\begin{align*}
p_{m-1}& = \frac{F(r_{m-1})}{p_{m}} = \frac{F(r_{m-1})}{f_{0}} = f_1 (r_{m-1}), \\[10pt]
p_{m-2} &= \frac{F(r_{m-2})}{p_{m-1}} = \frac{F(r_{m-1} + q_{m-1} \, p_{m-1})}{p_{m-1}} =  \frac{F(r_{m-1} \, f_0 + q_{m-1} \, f_1 (r_{m-1}))}{f_1 (r_{m-1})} = f_2 (r_{m-1}, q_{m-1}).
\end{align*}
More generally $$p_k = f_{m-k} (r_{m-1}, q_{m-1}, q_{m-2}, \dots,q_{k+1})$$
for $2 \le k \le m-2$ and $p_*=f_{m-1} (r_{m-1},q_{m-1},\dots,q_2)$. 

Therefore the integer sequence $(r_{m-1},q_{m-1},\dots,q_1)$ gives the presentation
\begin{align*}
F(n) & = F \left( r_{m-1} \, f_0 + q_{m-1} f_1 (r_{m-1}) + \sum\limits_{k=3}^{m} q_{m-k+1} f_{k-1} (r_{m-1},q_{m-1}, \dots, q_{m-k+2}) \right) \\
& = f_{m-1} (r_{m-1},q_{m-1}, \dots, q_{2}) \, f_{m} (r_{m-1},q_{m-1}, \dots, q_2, q_{1}),
\end{align*}
and $p=|f_m (r_{m-1},q_{m-1},\dots,q_1)|$.
\end{proof}

\begin{figure}[h]
  \begin{center}
    \begin{overpic}[unit=1mm,width=4in]{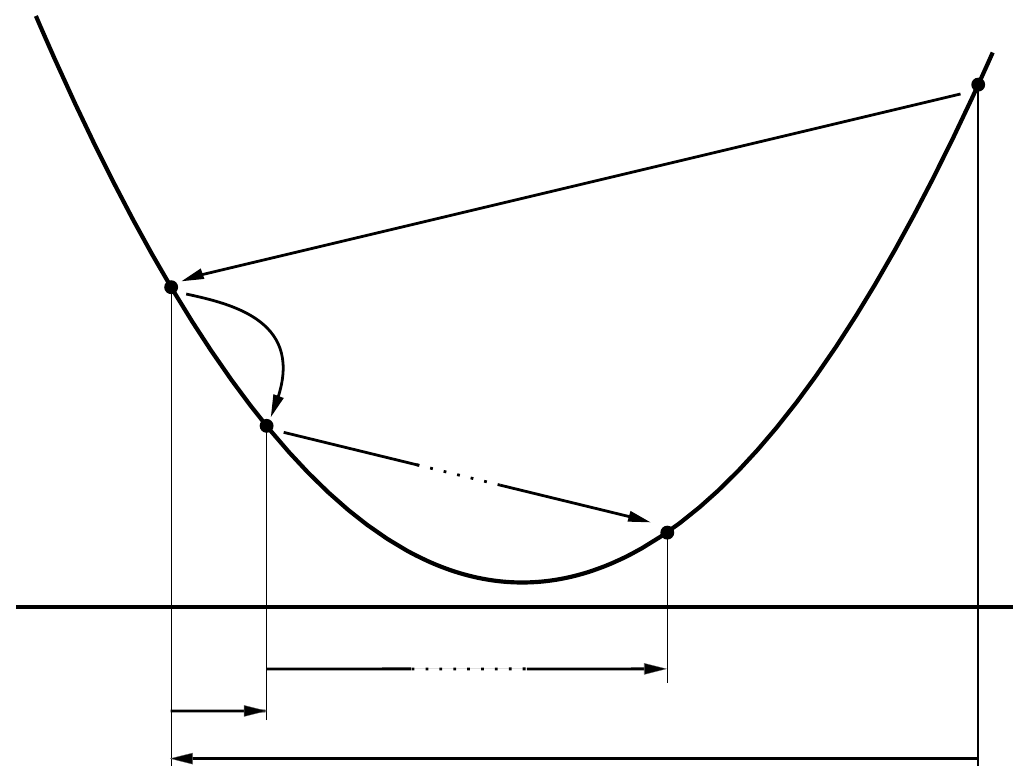}
      \put(99,67){$F(n)=p_1 \, p_0$}
      \put(-8,48){$F(r_1)=p_2 \, p_1$}
      \put(29,37.5){$F(r_2)=p_3 \, p_2$}
      \put(68,22){$F(r_k)=p_{k+1} \, p_k$}
      \put(55,13){\smaller $q_k \, p_k$}
      \put(18,10){\smaller $q_2 \, p_2$}
      \put(70,4){\smaller $q_1 \, p_1$}
    \end{overpic}
  \end{center}
\medskip
\caption{Sequence of decreasing values of $F$ used to compute $x_1, x_2, \dots, x_m$ in $f_m (x_1, x_2, \dots, x_m)$.}
\label{smaller_values}
\end{figure}

The proof of Theorem \ref{thm:all_factors} starts with an integer factorization $F(n_0)=p_1 \, p_0$ and 
constructs a sequence of factorizations $F(n_1) = p_1 \, p_2$, $F(n_2) = p_2 \, p_3, \dots$ such that 
$|F(n_0)| > |F(n_1)| > |F(n_2)| \dots$ until a prime number $F(n_m)$ with the trivial factorization $F(n_m) \cdot 1$ 
is reached. In this way prime-producing polynomials, which contain a large interval of consecutive prime values, 
make good candidates for having the recursively-factorable property.

In 1772, Euler \cite{Euler2} discovered that the polynomial $n^2-n+41$ produces
prime numbers for $n \in  [-39,40]$, and later Legendre \cite{Legendre}
noted that both $n^2+n+17$ and $n^2+n+41$ are prime for $n \in [-16,15]$ and 
$n=[-40,39]$, respectively. Le Lionnais considered polynomials of the type 
$n^2+n+\varepsilon$ in general, which he called Euler-like polynomials \cite{LeLionnais}, 
and integers $\varepsilon$ for which $n^2+n+\varepsilon$ is prime for 
$n=0,1,\dots,\varepsilon-2$ have come to be known as {\it lucky numbers of Euler}.

Rabinowitz \cite{Rabinowitz} proved that $\varepsilon$ is a lucky number of Euler if and only if
the field $\mathbb{Q}(\sqrt{1-4 \varepsilon})$ has class number 1. From this, Heegner \cite{Heegner} 
and Stark \cite{Stark1} showed that there are exactly six lucky numbers of Euler, namely 
2, 3, 5, 11, 17, and 41.

Legendre \cite{Legendre} explored other types of prime-producing quadratics such as $2n^2+\lambda$
which is prime when $\lambda = 29$ for $n = 0, 1, \dots ,28$. Akin to the Euler-like polynomials, these
quadratics give primes for $n = 0, 1, \dots, \lambda - 1$ for prime $\lambda$ if and only if 
$\mathbb{Q}(\sqrt{-2 \lambda})$ has class number 2 \cite{Frobenius, Louboutin}. Baker \cite{Baker} and 
Stark \cite{Stark2} found that the only such $\lambda$ are 3, 5, 11, and 29.

As seen in Lemma \ref{lem:recur_exist}, Euler-like and Legendre-like prime-producing quadratics are indeed
recursively-factorable. Further discussion of prime-producing quadratics can be found in \cite{Mollin,Ribenboim}.

\begin{lemma} \label{lem:recur_exist} \renewcommand{\labelenumi}{(\roman{enumi})}
The following quadratics (and their horizontal shifts) are recursively-factorable:
\begin{enumerate}
\item $n^2 + c$ \, where \,  $c \in \{1,2\}$,
\item $n^2 + n + c$ \,  where  \, $c \in \{1, 2, 3, 5, 11, 17, 41\}$ \label{Euler-like}
\item $2n^2 + c$ \, where \, $c \in \{1, 3, 5, 11, 29 \}$, \label{Legendre-like}
\item $2n^2 + 2n + c$ \, where \, $c \in \{1, 2, 3, 7, 19 \}$,  \label{2n^2+2n-like}
\item $3n^2 + c$ \, where \, $c =2$, 
\item $3n^2 + 3n + c$ \, where \, $c \in \{1, 2, 5, 11, 23\}$,
\item $4n^2 + c$ \, where \, $c \in \{1, 3, 7\}$, and
\item $4n^2 + 4n + c$ \, where \, $c \in \{2, 3, 5\}$.
\end{enumerate}
\end{lemma}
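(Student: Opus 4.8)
The plan is to verify the recursively-factorable criterion directly for one representative of each listed form and then invoke Lemma \ref{lem:shift_still_recursive} to absorb the horizontal shifts and the overall sign (each listed $F$ is positive definite, so $\pm F(n-h)$ is covered). So fix $F(n)=an^2+bn+c$ with $a\in\{1,2,3,4\}$ and negative discriminant, write $F(n)=a(n-n_0)^2+\Delta$ with vertex $n_0=-b/(2a)$ and minimum value $\Delta=(4ac-b^2)/(4a)>0$, and consider an arbitrary factorization $F(n)=pq$. Since $F>0$ I may take $0<p\le q$, so the smaller factor obeys $p\le\sqrt{F(n)}$. The engine of the proof is that reducing $n$ modulo $p$ toward the vertex strictly decreases $|F|$: the progression $n+p\mathbb{Z}$ has a member $r$ within distance $\tfrac12 p$ of $n_0$, and $|F(r)|<|F(n)|$ precisely when $n$ is not itself a nearest lattice point, i.e. when $|n-n_0|>\tfrac12 p$. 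First I would record this dichotomy: descent through the divisor $p$ succeeds unless $p\ge 2|n-n_0|$.

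For $a\in\{1,2,3\}$ the size bound alone finishes the matter. Combining $p\le\sqrt{F(n)}$ with the failure condition $p\ge 2|n-n_0|$ gives $4(n-n_0)^2\le p^2\le F(n)=a(n-n_0)^2+\Delta$, hence $(4-a)(n-n_0)^2\le\Delta$. Thus for every $n$ with $(n-n_0)^2>\Delta/(4-a)$, \emph{every} factorization descends through its smaller factor, so the criterion holds. It then remains to treat the finitely many $n$ with $(n-n_0)^2\le\Delta/(4-a)$; for each listed polynomial this is an explicit short interval (for example $|n+\tfrac12|\le\sqrt{163/12}<3.7$ for $n^2+n+41$, giving $n\in\{-4,\dots,3\}$), and I would check by direct computation that $F$ takes only prime values there, so those $n$ carry no nontrivial factorization and satisfy the criterion vacuously. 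This finite verification is exactly what the lucky-number and class-number-two results recalled above guarantee: the small arguments produced by the bound lie well inside the prime-producing ranges $0\le n\le c-2$ (resp.\ $0\le n\le \lambda-1$).

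The case $a=4$ is genuinely different, since there the quadratic terms cancel and the inequality degenerates to the vacuous $0\le\Delta$. Here I would sharpen the argument using $p\mid F(n)$. Writing $m=2|n-n_0|$ --- the even integer $2|n|$ for $4n^2+c$ and the odd integer $|2n+1|$ for $4n^2+4n+c$ --- the failure of descent forces $m\le p\le\sqrt{m^2+\Delta'}$ with $\Delta'=c$ (resp.\ $c-1$) and $p\mid m^2+\Delta'$. Then either $p=m$, which forces $m\mid\Delta'$, or $p\ge m+1$, which forces $2m+1\le\Delta'$. For the listed constants this closes off the obstruction: when $b=0$ the offset $\Delta'=c$ is odd while $m$ is even, so $m\mid\Delta'$ is impossible and $2m+1\le c$ pins $m$ to a tiny range; when $b=4$ the offset $\Delta'=c-1\in\{1,2,4\}$ has no odd divisor exceeding $1$, so the odd $m$ must satisfy $m=1$ or $2m+1\le\Delta'$. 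In every surviving case $m$, hence $n$, is bounded by a small constant, and a direct check again shows $F(n)$ is prime.

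The hard part, I expect, will be the $a=4$ analysis: the loss of the clean size estimate means the property is controlled not by magnitude but by the arithmetic of the constant term, and one must see that precisely the listed values of $c$ (an odd offset when $b=0$, a power of two plus one when $b=4$) annihilate the boundary factorization $p=m$ through a parity-and-divisibility obstruction. A secondary point requiring care throughout is confirming that the finite exceptional ranges produced by the bounds sit inside the intervals on which each polynomial is already known to be prime, so that the cited class-number theorems genuinely close the argument rather than leaving a composite small value unaccounted for.
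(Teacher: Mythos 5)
Your proposal is correct and follows essentially the same strategy as the paper: compare the bound $\min(p,q)\le\sqrt{F(n)}$ against the descent threshold $2|n-n_0|$ to reduce the criterion to a finite interval around the vertex, handle $a=4$ separately with a sharper bound on the smaller factor, and finish by checking the small values directly. The only cosmetic differences are that your $a=4$ case uses a divisibility-and-parity observation where the paper enumerates $p\in\{2n,2n+1,2n+2\}$ explicitly, and your finite check rests purely on primality of the exceptional values while the paper verifies the criterion on the slightly larger interval $[-c,c]$ using $c\mid F(\pm c)$ at the endpoints.
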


\begin{proof} 
We claim that if $F$ is one of these polynomials and all the values within a suitably large interval $I_{\hat{n}}$ 
are known to satisfy the recursively-factorable criterion for $F$, then the remaining values outside 
of $I_{\hat{n}}$ also satisfy the recursively-factorable criterion.

Supposing $F(n)=a n^2+b n+c$ is one of the polynomials in cases (i)-(viii), $F$ is a positive parabola having a 
minimum at either $n=0$ or $n=-\frac{1}{2}$. Furthermore the values $F(n) = F\left(-n-\frac{b}{a}\right)$ for all 
$n \in \mathbb{Z}$, so if $n$ satisfies the recursively-factorable criterion then so does $-n-\frac{b}{a}$.
Also note that $|F(m)|<|F(n)|$ for $m \in I_n = \left(\min\{-n-\frac{b}{a}, \, n\},\max\{-n-\frac{b}{a}, \, n\}\right)$. 

For cases (i)-(vi), define $\hat{n}$ such that $|2 \, n+\frac{b}{a}| > \lfloor \sqrt{F(n)} \rfloor$ for each $n \ge \hat{n}$.
Given that for each factorization presentation $F(n) = p \, q$ either $p \le \lfloor \sqrt{F(n)} \rfloor$ 
or $q \le \lfloor\sqrt{F(n)} \rfloor$, for $n \ge \hat{n}$ there exists a  $k \in \mathbb{Z}$ such that 
either $n - k \, p \in I_{\hat{n}}$ or $n - k \, q \in I_{\hat{n}}$. Thus if we can verify that the values within $I_{\hat{n}}$
satisfy the recursively-factorable criterion, then so do the values greater than $\hat{n}$ (and symmetrically
the values less than $-\hat{n}-\frac{b}{a}$), i.e., $F$ is recursively-factorable. In cases (vii) and (viii) we use a sharper
approximation of $\min\{p,q\}$ than $\lfloor \sqrt{F(n)} \rfloor$ to determine $\hat{n}$, but the idea is the same.

In cases (i), (iii), (v), and (vii), $F(n)$ is prime (or 1) for $n \in [1-c,c-1]$ and $c \mid F(\pm c)$ 
which means $c \mid F(0)=c$, so the recursively-factorable condition is satisfied for $n \in [-c,c]$. Similarly, 
$F(n)$ is prime (or 1) for $n \in [1-c,c-2]$ in cases (ii), (iv), (vi), and (viii). The recursively-factorable condition is 
satisfied for $-c$, $c-1$, and $c$ since $c \mid F(-c),F(c-1),F(c)$ and $F(0)=F(-1)=c$. Hence for all cases 
(i)-(viii) the recursively-factorable criterion is satisfied for $n \in [-c,c]$.

\medskip

\noindent \underline{Case (i)}: For $F(n)=n^2+c$ with $c \in \{1,2\}$, $\hat{n}=\lfloor\sqrt{\frac{c}{3}} \rfloor = 0$ 
and $I_{\hat{n}} = [0] \subset [-c,c]$.

\medskip

\noindent \underline{Case (ii)}: For $n^2 + n + c$ with $c \in \{1,2,3,5,11,17,41\}$, 
$I_{\hat{n}} = \left[-\left\lfloor -\frac{1}{2} + \sqrt{\frac{4c-1}{12}} \right\rfloor -1, \left\lfloor -\frac{1}{2} + \sqrt{\frac{4c-1}{12}} \right\rfloor \right]$
and yields the respective $I_{\hat{n}}$ intervals corresponding to each $c$: $[-1, 0] \subseteq [-1,1]$, 
$[-1,0]\subseteq [-2,2]$, $[-1,0]\subseteq [-3,3]$, $[-1,0]\subseteq [-5,5]$, $[-2, 1]\subseteq [-11,11]$, 
$[-2, 1]\subseteq [-17,17]$, and $[-4, 3]\subseteq [-41,41]$.

\medskip

\noindent  \underline{Case (iii)}: For $F(n)=2n^2 + c$ with $c \in \{1,3,5,11,29\}$, 
$I_{\hat{n}}=[-\lfloor \sqrt{\frac{c}{2}} \rfloor,\lfloor \sqrt{\frac{c}{2}} \rfloor]$ which gives the respective intervals:
$[0] \subseteq [-1,1]$, $[-1,1] \subseteq [-3,3]$, $[-1,1] \subseteq [-5,5]$, $[-2,2] \subseteq [-11,11]$, 
and $[-3,3] \subseteq [-29,29]$.

\medskip

\noindent  \underline{Case (iv)}:  Let $F(n) = 2n^2 + 2 n + c$ with $c \in \{1, 2, 3, 7, 19 \}$, 
$I_{\hat{n}} = \left[ - \lfloor \frac{\sqrt{2c -1} - 1}{2} \rfloor -1 , \lfloor \frac{\sqrt{2c -1} - 1}{2} \rfloor \right]$
which gives the respective intervals: 
$[0] \subseteq [-1, 1]$, $[0] \subseteq [-2, 2]$, $[0] \subseteq [-3, 3]$, $[-1,1] \subseteq [-7, 7]$, 
and $[-2, 2] \subseteq [-19, 19]$.

\medskip

\noindent  \underline{Case (v)}:  Let $F(n) = 3n^2 + 2$, then $I_{\hat{n}}=[0] \subseteq [-2,2]$.

\medskip

\noindent  \underline{Case (vi)}:  Let $F(n)=3n^2 +3n + c$ with $c \in \{1, 2, 5, 11, 23\}$, 
$I_{\hat{n}} = \left[ - \lfloor \frac{\sqrt{4c -3} - 1}{2} \rfloor -1 , \lfloor \frac{\sqrt{4c -3} - 1}{2} \rfloor \right]$
which gives the respective intervals: $[-1, 0] \subseteq [-1, 1]$, $[-1, 0] \subseteq [-2, 2]$, $[-2, 1] \subseteq [-5, 5]$,
$[-3, 2] \subseteq [-11, 11]$, and $[-5, 4] \subseteq [-23, 23]$.

\medskip

\noindent  \underline{Case (vii)}:  Let $F$ be of the form $4n^2 + c$ with $c \in \{1, 3, 7\}$. We claim that if
$F(n)=p \, q$ where $p \le q$ is an integer factorization presentation, then $p<2n$. Observe that $p=2n+1$
implies that $q\ge 2n+1$ and
$$4n^2+4n+1 = (2n+1)^2 \le p \, q = F(n) =4n^2 + c \quad \Longrightarrow \quad 4n+1 \le c$$
and is a contradiction for $n > c$. Similarly, for $p=2n$ and $q \ge 2n+1$,
$$4n^2+2n = 2n \, (2n+1) \le p \, q = F(n) =4n^2 + c \quad \Longrightarrow \quad 2n \le c$$
and is also contradiction for $n > c$. Clearly $q \not= 2n$ since $4n^2 + c = F(n) \not= p \, q = (2n)^2 = 4 n^2$.
Thus we are guaranteed that $2n > p$ and there exists an $r \in (1-n,n-1)$ such that $r \equiv n \pmod{p}$.

\noindent  \underline{Case (viii)}:  Let $F$ be of the form $4n^2 + 4n + c$ with $c \in \{2, 3, 5\}$. As in case (vii),
we show that $p < 2n$ for each integer factorization presentation $F(n) = p \, q$ where $p \le q$. First notice that
taking $p=2n+2$ and $q \ge 2n+2$ leads to
$$4n^2 +8n+4 = (2n+2)^2 \le p \, q = F(n) = 4 n^2 + 4 n + c \quad \Longrightarrow \quad 4n+4 \le c$$
and is a contradiction for $n > c$. Likewise, taking $p=2n+1$ and $q \ge 2n+2$ gives
$$4n^2 +6n+2 = (2n+1)(2n+2) \le p \, q = F(n) = 4 n^2 + 4 n + c \quad \Longrightarrow \quad 2n+2 \le c$$
and again is a contradiction for $n > c$. With $p =2n+1$ and $q=2n+1$, $4n^2 + 4n +c = F(n) \not= p \, q = 4n^2+4n+1$
as $c \not=1$. Finally assume that $p=2n$ and $q \ge 2n+3$,
$$4n^2 + 6n = (2n) (2n+3) \le p \, q = F(n) = 4 n^2 + 4 n + c \quad \Longrightarrow \quad 2n \le c$$
and is a contradiction for $n > c$. Finally take $q=2n+2$ to get the contradiction 
$4n^2+4n = (2n)(2n+2) = p \, q \not= F(n) = 4n^2 +4n +c$. Therefore if the recursively factorable 
criterion holds for the values in the interval $[-c,c]$, then $2n > p$ and the criterion holds for the values
outside of the interval also.
\end{proof}

\begin{table}
\begin{tabular}{|l|l|}
\hline
		& $c \le 5000$ \\
\hline \hline
$n^2-c$	& 2, 3, 6, 7, 11, 14, 23, 38, 47, 62, 83, 167, 227, 398 \\
\hline
$n^2+n-c$	& 1, 3, 4, 5, 7, 8, 9, 10, 13, 14, 15, 17, 18, 19, 22, \\
		& 23, 25, 27, 28, 33, 37, 39, 43, 45, 49, 53, 59, 67, \\ 
		& 69, 73, 75, 79, 85, 87, 93, 103, 109, 113, 115, \\ 
		& 127, 129, 139, 153, 163, 169, 179, 193, 199, 205, \\
		& 213, 235, 269, 283, 313, 337, 349, 373, 385, 409, \\
		& 469, 499, 619, 643, 655, 763, 829, 865, 883, 997, \\
		& 1063, 1555 \\
\hline
$2n^2-c$	& 1, 3, 5, 7, 11, 13, 15, 19, 21, 29, 31, 35, 37, 47, \\
		& 55, 61, 67, 69, 79, 91, 101, 103, 133, 139, 157, \\
		& 159, 181, 199, 229, 283, 439, 571, 643, 661, 1069 \\
\hline
$2n^2+2n-c$ 	& 1, 2, 3, 5, 6, 7, 9, 10, 11, 14, 15, 17, 21, 23, 26, \\
			& 27, 29, 35, 38, 41, 43, 53, 63, 65, 71, 81, 83, 86, \\
			& 107, 113, 146, 149, 173, 185, 191, 215, 218, 223, \\
			& 251, 317, 323, 371, 413, 491, 743, 833 \\
\hline
$3n^2-c$		& 1, 2, 5, 10, 14, 29, 46, 106, 149 \\
\hline
$3n^2+3n-c$ 	& 1, 2, 3, 4, 5, 7, 8, 11, 13, 17, 19, 23, 29, 31, 37, \\
			& 41, 47, 55, 59, 65, 67, 79, 89, 95, 97, 107, 119, \\
			& 131, 157, 163, 173, 199, 229, 257, 275, 317, 325 \\
			& 457, 479, 635, 637, 1379\\
\hline
$4n^2-c$		& 1, 2, 3, 5, 7, 11, 13, 17, 19, 23, 33, 41, 47, 59, 83 \\
			& 107, 167, 227, 563 \\
\hline
$4n^2+4n-c$	& 1, 2, 3, 5, 6, 7, 10, 11, 13, 19, 21, 22, 27, 31, 37, \\
			& 43, 46, 51, 61, 67, 82, 85, 115, 127, 163, 166, 226, \\
			& 277, 397 \\
\hline
\end{tabular}
\caption{Recursively-factorable polynomials with real roots.}
\label{tab:real_roots_recur_fac}
\end{table}

\begin{remark}
With some additional casework to show that the values over a suitably large interval satisfy the recursively-factorable
criterion, it can also be shown that the polynomials in Table \ref{tab:real_roots_recur_fac} are recursively-factorable.
Some of these quadratics are prime-producing polynomials, or a horizontal shift of one, listed in \cite{Mollin} and \cite{Weisstein}.

For these real-root quadratics, the condition $|F(m)| < |F(n)|$ for $m \in [2-n,n-1]$ no longer holds as it did in Lemma \ref{lem:recur_exist}. 
However for $n > \max \left\{ \frac{-b-\sqrt{b^2+8a c}}{2a}, \frac{-b+\sqrt{b^2+8a c}}{2a} \right\}$,
$|F(m)|<|F(n)|$ for all $m \in \left( -n-\frac{b}{a}, n \right)$. Hence $\hat{n}$ can be chosen to be sufficiently large so that,
for all $n>\hat{n}$, both $|F(m)|<|F(n)|$ for $m \in \left(-n-\frac{b}{a}, n \right)$ and $\lfloor \sqrt{|F(n)|} \rfloor < |2n+\frac{b}{a}|$.
\end{remark}

\section{Presentation as the Product of Binary Quadratic Forms} \label{sec:quad_dio}	

We show in this section that, for quadratic polynomials, the factorization presentations of Theorem \ref{main_recurrence}, defined
recursively as $F \left( \sum_{k=1}^m x_k f_{k-1} \right) = f_{m-1} f_m$, may be expressed in a closed form as the product of 
two binary quadratic forms. Theorem \ref{thm:dio_quad} establishes that, in this context, each factorization presentation sequence 
$(x_1,\dots, x_m)$ corresponds with a particular $A_m \in M_2(\mathbb{Z})$.

\begin{definition} {\rm Fix $F(n) = a \, n^2 + b \, n + c$. Let $\Delta_F$, $\eta_F$, $\phi_{F,0}$, and $\phi_{F,1}$ be functions
from $M_2 (\mathbb{Z}) \rightarrow \mathbb{Z}$ defined such that for 
$A=\begin{pmatrix} \alpha & \beta \\ \gamma & \delta \end{pmatrix}$,
\begin{equation}
\begin{aligned}
\Delta_F[A] & = \alpha \, \delta - a \, \beta \, \gamma, \\
\eta_F [A] & = \alpha \, \gamma + b \, \beta \, \gamma + c \, \beta \, \delta, \\
\phi_{F,0} [A] & = {\alpha}^2 + b \, \alpha \, \beta + a \, c \, {\beta}^2,\\
\phi_{F,1} [A] & = a {\gamma}^2 + b \, \gamma \, \delta + c \, {\delta}^2,
\end{aligned}
\end{equation} }
and for natural $m$,
\begin{equation}
\phi_{F,m} [A]  = \begin{cases}\phi_{F,0}[A] & \mbox{for even } m \\ \phi_{F,1}[A] & \mbox{for odd } m \end{cases} .
\end{equation}
We suppress the $F$ when it is clear by the context, favoring the notation $\Delta[A]$, 
$\eta[A]$, $\phi_0[A]$, $\phi_1[A]$, and $\phi_m[A]$.
\end{definition}

\begin{definition}
For $a \in \mathbb{Z}$, let
\begin{equation}
\Gamma_a:=\left\{ A \in M_2(\mathbb{Z}) : \Delta[A] = 1 \right\} .
\end{equation}
In general, the set $\Gamma_a$ is not closed under matrix multiplication and does not contain its inverses. However the case
when $a=1$ is particularly noteworthy as $\Gamma_1 = \mbox{SL}_2(\mathbb{Z})$ is the special linear group.
\end{definition}

\begin{theorem}
\label{thm:dio_main}
Let $F:\mathbb{Z} \rightarrow \mathbb{Z}$ such that $F(x) = a \, x^2 + b \, x + c$.
For $\alpha, \beta, \gamma, \delta \in \mathbb{Z}$,
$$F( \alpha \, \gamma + b \, \beta \, \gamma + c \, \beta \, \delta ) = (\alpha^2 + b \, \alpha \, \beta +
a \, c \, \beta^2)(a \, \gamma^2 + b \, \gamma \, \delta + c \, \delta^2) $$
if and only if 
$\alpha \, \delta - a \, \beta \, \gamma = 1$ or $-1-\frac{b \, ( \alpha \, \gamma + b \, \beta \, \gamma + c \, \beta \, \delta )}{c}$,
i.e., for $A \in M_2 (\mathbb{Z})$, 
\begin{equation}
F( \eta [A] ) = \phi_0 [A] \, \phi_1 [A]
\end{equation}
if and only if $\Delta [A] = 1$ or $-1-\frac{b}{c} \, \eta [A]$.
\end{theorem}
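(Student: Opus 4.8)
The plan is to reduce the stated biconditional to a single homogeneous polynomial identity in the entries of $A$, after which the two cases for $\Delta[A]$ fall out by a one-step factoring. Concretely, I would first establish the identity
\begin{equation*}
\phi_0[A] \, \phi_1[A] = a \, \eta[A]^2 + b \, \eta[A] \, \Delta[A] + c \, \Delta[A]^2,
\end{equation*}
valid for every $A = \left(\begin{smallmatrix} \alpha & \beta \\ \gamma & \delta \end{smallmatrix}\right) \in M_2(\mathbb{Z})$ (indeed for all $\alpha,\beta,\gamma,\delta$ in any commutative ring). Both sides are homogeneous of degree four in $\alpha,\beta,\gamma,\delta$, so the identity can be checked by expanding and matching coefficients; the efficient route is to subtract $c\,\Delta[A]^2$ and $a\,\eta[A]^2$ from the product $\phi_0[A]\,\phi_1[A]$ and observe that the remaining terms assemble exactly into $b\,\eta[A]\,\Delta[A]$. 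This bookkeeping is the only genuine labor in the argument, and it is the step I expect to be the main obstacle.

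Granting the identity, the equivalence is immediate. Since $F(\eta[A]) = a\,\eta[A]^2 + b\,\eta[A] + c$, the equation $F(\eta[A]) = \phi_0[A]\,\phi_1[A]$ is equivalent, after cancelling the common term $a\,\eta[A]^2$, to
\begin{equation*}
b\,\eta[A] + c = b\,\eta[A]\,\Delta[A] + c\,\Delta[A]^2,
\end{equation*}
that is, to $b\,\eta[A]\bigl(1 - \Delta[A]\bigr) + c\bigl(1 - \Delta[A]^2\bigr) = 0$. Factoring out $\bigl(1 - \Delta[A]\bigr)$ yields
\begin{equation*}
\bigl(1 - \Delta[A]\bigr)\bigl(b\,\eta[A] + c\,(1 + \Delta[A])\bigr) = 0,
\end{equation*}
so either $\Delta[A] = 1$ or $c\,(1 + \Delta[A]) = -\,b\,\eta[A]$, i.e. $\Delta[A] = -1 - \tfrac{b}{c}\,\eta[A]$ when $c \neq 0$. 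This is precisely the asserted dichotomy, and since every step is an equivalence the argument proves both directions of the biconditional at once.

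Two points I would flag. First, the division by $c$ in the statement presupposes $c \neq 0$; the factored form $\bigl(1 - \Delta[A]\bigr)\bigl(b\,\eta[A] + c\,(1 + \Delta[A])\bigr) = 0$ is the cleaner, division-free version, and is what I would actually prove. Second, the key identity is a Brahmagupta--Gauss composition formula: the binary quadratic forms $\phi_0$ and $\phi_1$ both have discriminant $b^2 - 4ac = \operatorname{disc} F$, and the identity expresses their product as the form $a\,X^2 + b\,XY + c\,Y^2$ evaluated at $(X,Y) = (\eta[A], \Delta[A])$. In the special case $a = c = 1$, $b = 0$ it specializes to the Brahmagupta--Fibonacci identity $(\alpha^2+\beta^2)(\gamma^2+\delta^2) = (\alpha\gamma+\beta\delta)^2 + (\alpha\delta-\beta\gamma)^2$, a convenient sanity check on the signs and on the labeling of $\eta$ and $\Delta$.
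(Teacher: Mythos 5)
Your proof is correct, and it arrives at exactly the identity the paper verifies --- the paper's entire proof is the single display
\begin{equation*}
F(\eta[A]) - \phi_0[A]\,\phi_1[A] = \bigl(1 - \Delta[A]\bigr)\bigl(c\,\Delta[A] + c + b\,\eta[A]\bigr),
\end{equation*}
asserted to hold ``by expanding both sides.'' The difference is in how you get there: rather than expanding the full degree-four difference and checking the factored form directly, you isolate the composition identity $\phi_0[A]\,\phi_1[A] = a\,\eta[A]^2 + b\,\eta[A]\,\Delta[A] + c\,\Delta[A]^2$ as a standalone lemma, after which the cancellation of $a\,\eta[A]^2$ and the factoring of $(1-\Delta[A])$ are two transparent lines. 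This buys a cleaner separation of the one genuinely computational step from the logic, makes the ``only if'' direction manifestly an equivalence chain, and explains \emph{why} the factorization exists (both forms have discriminant $b^2-4ac$ and their Gauss composition is the form $aX^2+bXY+cY^2$ evaluated at $(\eta[A],\Delta[A])$); the paper's version is shorter but leaves the reader to reproduce an unstructured expansion. Your two flagged points are also well taken: the division-free condition $b\,\eta[A] + c\,(1+\Delta[A]) = 0$ is the correct formulation when $c = 0$ (the paper's statement silently assumes $c \neq 0$), and the $a=c=1$, $b=0$ specialization to the Brahmagupta--Fibonacci identity is consistent with the paper's own Corollary~\ref{Fib-Brahm_Identity}.
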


\begin{proof}
By expanding both sides, one can verify that:
\begin{align*}
\lefteqn{F( \alpha \, \gamma + b \, \beta \, \gamma + c \, \beta \, \delta )  - (\alpha^2 + b \, \alpha \, \beta + a \, c \, \beta^2) 
    \, (a \, \gamma^2 + b \, \gamma \, \delta + c \, \delta^2)}  \\
	& = ( 1 -  (\alpha \, \delta - a \, \beta \, \gamma ) ) \,
(c \, (\alpha \, \delta - a \, \beta \, \gamma ) + (c + b \, ( \alpha \, \gamma + b \, \beta \, \gamma + c \, \beta \, \delta ) )). \qedhere
\end{align*}
\end{proof}

\begin{remark}
The set of matrices $\mathcal{K}_1 \subset \Gamma_a$ given by
\begin{equation} \label{K1}
\mathcal{K}_1 = \left\{ \begin{pmatrix} 1 & 0 \\ s & 1 \end{pmatrix}, \begin{pmatrix} -1 & 0 \\ s & -1 \end{pmatrix} \right\},
\end{equation}
$\mathcal{K}_2 \subset \Gamma_1$ and $\mathcal{K}_3 \subset \Gamma_{-1}$ given by
\begin{equation} \label{K2_and_K3}
\mathcal{K}_2 = \left\{ \begin{pmatrix} s & 1 \\ -1 & 0 \end{pmatrix}, \begin{pmatrix} s & -1 \\ 1 & 0 \end{pmatrix} \right\} 
\quad \mbox{and} \quad
\mathcal{K}_3 = \left\{ \begin{pmatrix} s & 1 \\ 1 & 0 \end{pmatrix}, \begin{pmatrix} s & -1 \\ -1 & 0 \end{pmatrix} \right\},
\end{equation}
respectively, correspond to the trivial factorization in Theorem \ref{thm:dio_main} for each $s \in \mathbb{Z}$.
\end{remark}

The Fibonacci-Brahmagupta identity has a long history in mathematics beginning with its first appearance
in Diophantus' {\it Arithmetica} (III, 19) \cite{Diophantus} c.250 in the form of $(p^2 + q^2)(r^2+ s^2) = (p r + q s)^2 + (p s - q r)^2$.
Later in c.628, Brahmagupta generalized Diophantus' identity by showing that numbers of the form 
$p^2 + c \, q^2$ are closed under multiplication. Brahmagupta's identity was popularized in 1225 upon its reprinting in Fibonacci's 
{\it Liber Quadratorum} \cite{Fibonacci} where the first rigorous proof of the identity appeared. Finally in 1770, Euler \cite{Euler1} 
further generalized Brahmagupta's identity by providing the parametric solution
\begin{equation} \label{Euler_Identity}
(a d \, p^2 + c e \, q^2) (d e \, r^2 + a c \, s^2) = a e (d \, p r \pm c \, q s)^2 + c d (a \, p s \mp e \, q r)^2
\end{equation}
for the Diophantine equation $A x^2 + B y^2 = C$ with composite $C$. In Corollary \ref{Fib-Brahm_Identity} we show that the case $b=0$ 
in Theorem \ref{thm:dio_main} corresponds to the case $d=e=1$ in Euler's Identity (\ref{Euler_Identity}).

\begin{corollary} \label{Fib-Brahm_Identity}
$$ a \, (\alpha \, \gamma +  c \, \beta \, \delta)^2 + c \, (\alpha \, \delta - a \, \beta \, \gamma)^2 
= (\alpha^2 + a \, c \, \beta^2) \, (a \, \gamma^2 + c \, \delta^2) $$
\end{corollary}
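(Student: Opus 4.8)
The plan is to obtain this identity as the $b = 0$ specialization of Theorem \ref{thm:dio_main}, reusing the explicit polynomial identity that its proof establishes. That proof shows, unconditionally for every $F(x) = a x^2 + b x + c$ and all $\alpha,\beta,\gamma,\delta \in \mathbb{Z}$, that
\begin{equation*}
F(\eta[A]) - \phi_0[A]\,\phi_1[A] = \bigl(1 - \Delta[A]\bigr)\bigl(c\,\Delta[A] + c + b\,\eta[A]\bigr).
\end{equation*}
So the first step is simply to set $b = 0$ and watch the right-hand side simplify.

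With $b = 0$ we have $F(x) = a x^2 + c$, so $F(\eta[A]) = a\,\eta[A]^2 + c$, and the quadratic factor on the right loses its $\eta$-term, factoring as a difference of squares:
\begin{equation*}
\bigl(1 - \Delta[A]\bigr)\bigl(c\,\Delta[A] + c\bigr) = c\,\bigl(1 - \Delta[A]\bigr)\bigl(1 + \Delta[A]\bigr) = c\,\bigl(1 - \Delta[A]^2\bigr).
\end{equation*}
The next step is to substitute the $b=0$ forms of the four functions, namely $\eta[A] = \alpha\gamma + c\beta\delta$, $\Delta[A] = \alpha\delta - a\beta\gamma$, $\phi_0[A] = \alpha^2 + ac\beta^2$, and $\phi_1[A] = a\gamma^2 + c\delta^2$, and then cancel the constant $c$ appearing on both sides. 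Moving the product $\phi_0[A]\,\phi_1[A]$ to the right and the $\Delta[A]^2$ term to the left then leaves exactly
\begin{equation*}
a(\alpha\gamma + c\beta\delta)^2 + c(\alpha\delta - a\beta\gamma)^2 = (\alpha^2 + ac\beta^2)(a\gamma^2 + c\delta^2),
\end{equation*}
which is the assertion.

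I do not anticipate a genuine obstacle: the whole content is the elementary observation that the factor $c\,\Delta[A] + c + b\,\eta[A]$ degenerates to $c\,(1 + \Delta[A])$ once $b = 0$, and that the resulting constant $c$ cancels against the constant term of $F(\eta[A])$. The only care needed is bookkeeping in that cancellation. As a self-contained alternative that avoids invoking the theorem, one could instead expand both sides of the claimed identity directly; this is a finite polynomial identity in $\alpha,\beta,\gamma,\delta$, so a term-by-term comparison verifies it, and this also exhibits the correspondence with the $d = e = 1$ case of Euler's identity (\ref{Euler_Identity}) noted above.
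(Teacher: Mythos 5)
Your proof is correct, and it follows the same basic strategy as the paper (specialize Theorem \ref{thm:dio_main} to $b=0$), but the execution differs in a way worth noting. The paper's proof invokes the \emph{statement} of the theorem under the normalization $\alpha\delta - a\beta\gamma = 1$, obtains $a(\alpha\gamma + c\beta\delta)^2 + c\cdot 1^2 = (\alpha^2+ac\beta^2)(a\gamma^2+c\delta^2)$ on that locus, and then passes to the general identity by replacing $1$ with $\alpha\delta - a\beta\gamma$ --- a substitution that, read literally, only re-expresses the identity on the set where $\Delta[A]=1$ rather than proving it for arbitrary $\alpha,\beta,\gamma,\delta$. You instead reuse the unconditional polynomial identity displayed in the theorem's \emph{proof}, namely $F(\eta[A]) - \phi_0[A]\,\phi_1[A] = (1-\Delta[A])\bigl(c\,\Delta[A] + c + b\,\eta[A]\bigr)$, set $b=0$ so the second factor becomes $c(1+\Delta[A])$, and rearrange $a\,\eta[A]^2 + c - \phi_0[A]\phi_1[A] = c(1-\Delta[A]^2)$ into $a\,\eta[A]^2 + c\,\Delta[A]^2 = \phi_0[A]\,\phi_1[A]$. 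This yields the corollary for all integers with no determinant hypothesis and no substitution step, which is the stronger statement the corollary actually asserts; your fallback of direct term-by-term expansion is also sound, since both sides agree as polynomials in $\alpha,\beta,\gamma,\delta$.
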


\begin{proof}
When $b=0$, $F(x)= a \, x^2 + c$ and
\begin{align*}
a \, (\alpha \, \gamma +  c \, \beta \, \delta)^2 + c \cdot 1^2 & = F(\alpha \, \gamma + c \, \beta \, \delta)  \\
	& = (\alpha^2 + a \, c \, \beta^2) \, (a \, \gamma^2 + c \, \delta^2)
\end{align*}
where $\alpha \, \delta - a \, \beta \, \gamma = 1$. Hence
\begin{equation*} a \, (\alpha \, \gamma +  c \, \beta \, \delta)^2 + c \, (\alpha \, \delta - a \, \beta \, \gamma)^2 
= (\alpha^2 + a \, c \, \beta^2) \, (a \, \gamma^2 + c \, \delta^2) \, . \qedhere
\end{equation*}
\end{proof}

\begin{theorem}
\label{thm:dio_quad}
For $F(n) = a \, n^2 + b \, n + c$ and $m \geq 0$,
$$f_{m} (x_1, \dots ,x_m) = \phi_{m} [A_m]$$
where $A_m \in \Gamma_a$  defined recursively by
$$A_0 = \begin{pmatrix} 1 & 0 \\ 0 & 1 \end{pmatrix} 
\quad \quad \mbox{ and } \quad \quad A_{k+1} 
= \begin{pmatrix} \alpha_{k+1} & \beta_{k+1} \\ \gamma_{k+1} & \delta_{k+1} \end{pmatrix} 
= A_{k}+x_{k+1} B_{k}$$ 
for $1 \le k \le m-1$ such that
$$B_{k} = \begin{cases}
\begin{pmatrix} a \, \gamma_k & \delta_k \\ 0 & 0 \end{pmatrix}  &  \mbox{ for odd } k\\
\\
\begin{pmatrix} 0 & 0 \\ \alpha_k & a \, \beta_k \end{pmatrix}  &  \mbox{ for even } k\\
\end{cases}.$$
\end{theorem}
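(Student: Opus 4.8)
The plan is to induct on $m$, using the quadratic specialization of the recurrence recorded in Remark \ref{quad_recurrence}, namely $f_m = f_{m-2} + x_m\,\partial f_{m-1}/\partial x_{m-1} + a\,x_m^2 f_{m-1}$. Before addressing the $\phi$-identity itself, I would first confirm that $A_m \in \Gamma_a$ for every $m$. Since $A_0 = I$ satisfies $\Delta[A_0]=1$, it suffices to check that each update $A_{k+1}=A_k + x_{k+1}B_k$ preserves $\Delta$. For odd $k$ the update adds multiples of the bottom row into the top row (so that $\alpha$ gains $a\,x_{k+1}\gamma_k$ and $\beta$ gains $x_{k+1}\delta_k$ while $\gamma,\delta$ are fixed), and a one-line expansion shows the cross terms $a\,x_{k+1}\gamma_k\delta_k$ cancel, leaving $\Delta[A_{k+1}]=\Delta[A_k]$; the even-$k$ case is identical with the roles of the two rows exchanged. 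Hence $\Delta[A_m]=1$ for all $m$.

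For the main identity the base cases are immediate: $\phi_0[A_0]=1=f_0$, and since $A_1=\left(\begin{smallmatrix}1&0\\x_1&1\end{smallmatrix}\right)$ one gets $\phi_1[A_1]=a x_1^2+b x_1 + c = F(x_1)=f_1$. For the inductive step I would carry out the even-$m$ case in detail and invoke symmetry for odd $m$. The crucial structural observation is that consecutive matrices share a row: because $B_{m-1}$ (odd index) has zero bottom row, $A_m$ and $A_{m-1}$ share $\gamma,\delta$; and because $B_{m-2}$ (even index) has zero top row, $A_{m-1}$ and $A_{m-2}$ share $\alpha,\beta$. This yields the identification $\phi_0[A_{m-1}]=\phi_0[A_{m-2}]$, which is exactly what converts the $x_m$-free part of $\phi_0[A_m]$ into the $f_{m-2}$ term of the recurrence.

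The heart of the argument is to expand $\phi_0[A_m]=\alpha_m^2+b\,\alpha_m\beta_m+ac\,\beta_m^2$ after substituting $\alpha_m=\alpha_{m-1}+a\,x_m\gamma_{m-1}$ and $\beta_m=\beta_{m-1}+x_m\delta_{m-1}$, and to collect by powers of $x_m$. The degree-$0$ coefficient is precisely $\phi_0[A_{m-1}]=\phi_0[A_{m-2}]$; the degree-$2$ coefficient is $a\bigl(a\gamma_{m-1}^2+b\gamma_{m-1}\delta_{m-1}+c\delta_{m-1}^2\bigr)=a\,\phi_1[A_{m-1}]$; and the degree-$1$ coefficient is $2a\alpha_{m-1}\gamma_{m-1}+b\alpha_{m-1}\delta_{m-1}+ab\beta_{m-1}\gamma_{m-1}+2ac\beta_{m-1}\delta_{m-1}$. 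To see that this last expression equals $\partial\phi_1[A_{m-1}]/\partial x_{m-1}$, I would use $\gamma_{m-1}=\gamma_{m-2}+x_{m-1}\alpha_{m-2}$ and $\delta_{m-1}=\delta_{m-2}+a\,x_{m-1}\beta_{m-2}$, which give $\partial\gamma_{m-1}/\partial x_{m-1}=\alpha_{m-2}=\alpha_{m-1}$ and $\partial\delta_{m-1}/\partial x_{m-1}=a\beta_{m-2}=a\beta_{m-1}$; the chain rule applied to $\phi_1[A_{m-1}]=a\gamma_{m-1}^2+b\gamma_{m-1}\delta_{m-1}+c\delta_{m-1}^2$ then reproduces the degree-$1$ coefficient term for term. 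Matching the three coefficients against $f_{m-2}+x_m\,\partial f_{m-1}/\partial x_{m-1}+a\,x_m^2 f_{m-1}$ closes the even case, and the odd case follows by interchanging $(\alpha,\beta)$ with $(\gamma,\delta)$ and $\phi_0$ with $\phi_1$.

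I expect the main obstacle to be organizational rather than conceptual: the recurrence for $f_m$ links three consecutive indices while the matrix recurrence links only adjacent ones, so the bookkeeping must compose two matrix steps $A_{m-2}\to A_{m-1}\to A_m$ and keep careful track of which entries depend on $x_{m-1}$ when the derivative is taken. The enabling facts are the shared-row identities and the clean values of $\partial\gamma_{m-1}/\partial x_{m-1}$ and $\partial\delta_{m-1}/\partial x_{m-1}$, which together guarantee that the chain-rule derivative of $\phi_1[A_{m-1}]$ sees exactly the entries of $A_{m-1}$ that appear in the cross terms of $\phi_0[A_m]$.
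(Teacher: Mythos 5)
Your proposal is correct and follows essentially the same route as the paper's proof: induction on $m$ via the quadratic recurrence of Remark \ref{quad_recurrence}, computing $\partial f_{m-1}/\partial x_{m-1}$ through the chain rule on the matrix entries (using $\partial\gamma_{m-1}/\partial x_{m-1}=\alpha_{m-2}$ and $\partial\delta_{m-1}/\partial x_{m-1}=a\,\beta_{m-2}$), and matching the result against the expansion of $\phi_m[A_m]$, with the determinant preservation checked separately. Your organization by powers of $x_m$ and the explicit shared-row observation are only cosmetic differences from the paper's direct comparison of the two expanded expressions.
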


\begin{proof}
We shall proceed by induction on $m$. For each $1 \le k \le m$, define $A_{k} \in \Gamma_a$ and 
$B_k$ recursively as stated in the hypothesis. Initially we see that $f_0 = 1 = \phi_{0} [A_0]$ and 
$f_1 = F(x_1) = \phi_{1} [A_1]$ satisfies the hypothesis. Now assume $f_{2j}=\phi_0 [A_{2j}]$ and 
$f_{2j+1}=\phi_1[A_{2j+1}]$ for each $0 \le j \le \lceil \frac{m}{2} \rceil$. Suppose $m=2j$ for some $j \ge 1$.
Remark \ref{quad_recurrence} gives 
\begin{equation}
\label{quad_partial_2j}
f_{2j}=f_{2j-2} + x_{2j} \pderiv{x_{2j-1}}  \big[ f_{2j-1} \big] + a \, x_{2j}^2 f_{2j-1}.
\end{equation}
By the induction hypothesis
\begin{equation} \label{quad_fac_2j-2}
f_{2j-2} = \phi_0[A_{2j-2}] = \alpha_{2j-2}^2 + b \, \alpha_{2j-2} \, \beta_{2j-2} + a c \, \beta_{2j-2}^2
\end{equation}
and
\begin{equation} \label{quad_fac_2j-1}
f_{2j-1} = \phi_1[A_{2j-1}] = a \, \gamma_{2j-1}^2 + b \, \gamma_{2j-1} \, \delta_{2j-1} + c \, \delta_{2j-1}^2.
\end{equation}
The partial derivative $\pderiv{x_{2j-1}}  \big[ \phi_1[A_{2j-1}] \big]$ may be evaluated through the equation 
$A_{2j-1} = A_{2j-2} + x_{2j-1} B_{2j-2}$. In particular 
$$\pderiv{x_{2j-1}}  \big[\gamma_{2j-1}\big] = \alpha_{2j-2} \quad \mbox{and} \quad \pderiv{x_{2j-1}}  \big[\beta_{2j-1}\big] = a \, \beta_{2j-2}$$ 
which yields
\begin{equation} \label{quad_der_2j-1}
\begin{aligned}
\pderiv{x_{2j-1}}  \big[ f_{2j-1} \big] 
	& = \pderiv{x_{2j-1}}  \big[\phi_1 [A_{2j-1}] \big]  \\
	& = \pderiv{x_{2j-1}} \left[ a \, \gamma_{2j-1}^2+ b \, \gamma_{2j-1} \delta_{2j-1} + c \, \delta_{2j-1}^2 \right] \\ 
	& = 2 \, a \, \gamma_{2j-1} \, \alpha_{2j-2} + b  \left( a \, \gamma_{2j-1} \, \beta_{2j-2} + \delta_{2j-1} \, \alpha_{2j-2} \right) + 2 \, a c \, \delta_{2j-1} \, \beta_{2j-2}
\end{aligned}
\end{equation}
Substituting (\ref{quad_fac_2j-2}), (\ref{quad_fac_2j-1}), and (\ref{quad_der_2j-1}) into (\ref{quad_partial_2j}) gives
\begin{equation}  \label{fac_comp_2j}
\begin{aligned}
f_{2j} & = (\alpha_{2j-2}^2 + b \, \alpha_{2j-2} \, \beta_{2j-2} + a c \, \beta_{2j-2}^2) \\
	& \quad +x_{2j} (2 \, a \, \gamma_{2j-1} \, \alpha_{2j-2} + a  b \, \gamma_{2j-1} \, \beta_{2j-2} + b \, \delta_{2j-1} \, \alpha_{2j-2}  \\
	& \quad + 2 \, a c \, \delta_{2j-1} \, \beta_{2j-2}) + x_{2j}^2 \, (a) ( a \, \gamma_{2j-1}^2 + b \, \gamma_{2j-1} \, \delta_{2j-1} + c \, \delta_{2j-1}^2).
\end{aligned}
\end{equation}
As defined in the hypothesis,
\begin{equation}
\begin{aligned}
A_{2j} & = A_{2j-1} + x_{2j} B_{2j-1}  \\
	& = (A_{2j-2} + x_{2j-1} B_{2j-2}) + x_{2j} B_{2j-1} \\
	& = \begin{pmatrix} \alpha_{2j-2} & \beta_{2j-2} \\ \gamma_{2j-2} & \delta_{2j-2} \end{pmatrix} 
+ \begin{pmatrix} 0 & 0 \\ x_{2j-1} \alpha_{2j-2} & a \, x_{2j-1}\beta_{2j-2} \end{pmatrix} 
+ \begin{pmatrix} a \, x_{2j} \gamma_{2j-1} & x_{2j} \delta_{2j-1} \\ 0 & 0 \end{pmatrix}\\
	& = \begin{pmatrix} 
	\alpha_{2j-2} + a \, x_{2j} \, \gamma_{2j-1} 	& 	\beta_{2j-2} + x_{2j} \, \delta_{2j-1} \\
	\gamma_{2j-2} + x_{2j-1} \, \alpha_{2j-2}	& 	\delta_{2j-2} + a \, x_{2j-1} \, \beta_{2j-2}
\end{pmatrix} 	
\end{aligned}
\end{equation}
so
\begin{equation}  \label{phi_comp_2j}
\begin{aligned}
\phi_{2j}[A_{2j}] 	& = \phi_0 [A_{2j}]\\
	& = (\alpha_{2j-2} + a \, x_{2j} \, \gamma_{2j-1})^2  + ac \, (\beta_{2j-2} + x_{2j} \, \delta_{2j-1})^2 \\
	& \quad + b \, (\alpha_{2j-2} + a \, x_{2j} \, \gamma_{2j-1})(\beta_{2j-2} + x_{2j} \, \delta_{2j-1})	.
\end{aligned}
\end{equation}
Comparing (\ref{fac_comp_2j}) and (\ref{phi_comp_2j}) shows that $f_{2j} = \phi_{2j}[A_{2j}]$. 

Initially $\Delta[A_0] = 1$ and by the induction hypothesis $\Delta [A_k] = 1$ for $1 \le k\le m-1$, so we check that $A_m \in \Gamma_a$:
\begin{align*}
\Delta[A_m]  & = \Delta \left[\begin{pmatrix}\alpha_{m-1} + x_m \, a \, \gamma_{m-1} & \beta_{m-1} + x_m \, \delta_{m-1} \\ \gamma_{m-1} & \delta_{m-1} \end{pmatrix} \right] \\
& = (\alpha_{m-1}+x_m \, a \, \gamma_{m-1}) \, \delta_{m-1} - a \, (\beta_{m-1} + x_m \, \delta_{m-1}) \, \gamma_{m-1} \\
& = (\alpha_{m-1} \, \delta_{m-1} - a \, \beta_{m-1} \, \gamma_{m-1}) = \Delta[A_{m-1}] = 1 \, .
\end{align*}

Similarly when
$m=2j+1$, Remark \ref{quad_recurrence} says that
\begin{equation}
\label{quad_partial_2j+1}
f_{2j+1}=f_{2j-1} + x_{2j+1} \pderiv{x_{2j}}  \big[ f_{2j} \big] + a \, x_{2j+1}^2 f_{2j}.
\end{equation}
whose partial derivative  $\pderiv{x_{2j}} \big[ f_{2j} \big] = \pderiv{x_{2j}} \big[ \phi_{2j} [A_{2j}] \big]$ 
may be computed through  (\ref{phi_comp_2j}) as
\begin{equation}  \label{quad_der_2j}
\begin{aligned}
\pderiv{x_{2j}}  \big[ f_{2j} \big]  & = 2 \, a \, \alpha_{2j} \, \gamma_{2j-1} + b \, \alpha_{2j} \, \delta_{2j-1}  
	+ a \, b \, \gamma_{2j-1} \, \beta_{2j} + 2 \, a \, c \, \beta_{2j} \, \delta_{2j-1}
\end{aligned}
\end{equation}
since $\alpha_{2j} = \alpha_{2j-2}+a \, x_{2j} \, \gamma_{2j-1}$ and $\beta_{2j} = \beta_{2j-2}+x_{2j} \, \delta_{2j-1}$.
Putting (\ref{quad_fac_2j-1}), (\ref{quad_partial_2j+1}), and (\ref{quad_der_2j}) together with the fact that 
$f_{2j} = \phi_0[A_{2j}]$ gives
\begin{equation}  \label{fac_comp_2j+1}
\begin{aligned}
f_{2j+1} & = (a \, \gamma_{2j-1}^2 + b \, \gamma_{2j-1} \delta_{2j-1} + c \, \delta_{2j-1}^2) \\
	& \quad + x_{2j+1} (2 \, a \, \alpha_{2j} \, \gamma_{2j-1} + b \, \alpha_{2j} \, \delta_{2j-1} 
	+ a \, b \, \gamma_{2j-1} \, \beta_{2j} \\
	& \quad + 2 \, a \, c \, \beta_{2j} \, \delta_{2j-1}) + x_{2j+1}^2 \, (a) (\alpha_{2j}^2
	+ b \, \alpha_{2j} \, \beta_{2j} + a \, c \, \beta_{2j}^2)
\end{aligned}
\end{equation}
and may be compared with $\phi_{2j+1} [A_{2j+1}]$ which is computed thusly:
\begin{equation} \label{phi_comp_2j+1}
\begin{aligned}
\phi_{2j+1} \big[ A_{2j+1} \big]  & = \phi_1 \left[ 
\begin{pmatrix} 
	\alpha_{2j-1} + a \, x_{2j} \, \gamma_{2j-1} 	& 	\beta_{2j-1} + x_{2j} \, \delta_{2j-1} \\
	\gamma_{2j-1} + x_{2j+1} \, \alpha_{2j}	& 	\delta_{2j-1} + a \, x_{2j+1} \, \beta_{2j} 
\end{pmatrix} \right]  \\
	& = a \, (\gamma_{2j-1} + x_{2j+1} \, \alpha_{2j})^2 
	+ c \, (\delta_{2j-1} + a \, x_{2j+1} \, \beta_{2j})^2	\\
	& \quad + b\, (\gamma_{2j-1} + x_{2j+1} \, \alpha_{2j})(\delta_{2j-1} + a \, x_{2j+1} \, \beta_{2k}).
\end{aligned}
\end{equation}
Checking that (\ref{fac_comp_2j+1}) is equal to (\ref{phi_comp_2j+1}) shows $f_m = \phi_m[A_m]$.

We have that $\Delta[A_k] = 1$ for $1 \le k \le m-1$, so
\begin{align*}
\Delta[A_m]  & = \Delta \left[\begin{pmatrix}\alpha_{m-1} & \beta_{m-1} \\ \gamma_{m-1} + x_m \, \alpha_{m-1} & \delta_{m-1} + x_m \, a \, \beta_{m-1} \end{pmatrix} \right] \\
& = \alpha_{m-1} \, (\delta_{m-1}+x_m \, a \, \beta_{m-1}) - a \, \beta_{m-1} (\gamma_{m-1}+ x_m \, \alpha_{m-1}) \\
& = (\alpha_{m-1} \, \delta_{m-1} - a \, \beta_{m-1} \, \gamma_{m-1}) = \Delta[A_{m-1}] = 1 \, .
\end{align*}
which completes the proof.
\end{proof}

Combining Theorems \ref{thm:all_factors} and \ref{thm:dio_quad} implies that for a recursively-factorable polynomial $F$, 
each non-trivial factorization presentation $(n, p, q \in \mathbb{Z}: |F(n)| = p \, q)$ is represented by some 
$A_m \in \Gamma_a$ via the identity $F(\eta[A_m])=\phi_0 [A_m] \, \phi_1 [A_m]$ from Theorem \ref{thm:dio_main}.

\begin{example}
Returning to Example \ref{recur_ex}, for $F(n)=3n^2+5n+11$ we can compute $f_3 (2,-1,4)$ using 
Theorem \ref{thm:dio_quad}:
\begin{align*}
A_1 & = \begin{pmatrix} 1 & 0 \\ 2 & 1 \end{pmatrix} \\
A_2 & = \begin{pmatrix} 1 & 0 \\ 2 & 1 \end{pmatrix} + (-1) \begin{pmatrix} 3 \cdot 2 & 1 \\ 0 & 0  \end{pmatrix} 
= \begin{pmatrix} -5 & -1 \\ 2 & 1 \end{pmatrix} \\
A_3 & = \begin{pmatrix} -5 & -1 \\ 2 & 1 \end{pmatrix} + (4) \begin{pmatrix} 0 & 0 \\ -5 & 3 \cdot (-1) \end{pmatrix} 
= \begin{pmatrix} -5 & -1 \\ -18 & -11 \end{pmatrix} 
\end{align*}
and $$f_3 (2,-1,4) = \phi_1 [A_3] = 3 \, (-18)^2 + 5 \, (-18) (-11) + 11 \, (-11)^2 = 3293.$$
It is readily checked that $\Delta [A_3] = 1$ and meets the conditions of Theorem \ref{thm:dio_main}. 
Since $\eta [A_3] = 301$ and $\phi_2 [A_3] = 83$, it follows that $$F(301) = 3293 \times 83.$$
\end{example}

\begin{remark} \label{non-recur-fac}
The non-trivial factorization $F(1)= 3 \cdot 3$, but $F(0)=7$ is the only value less than $F(1)$ and $1 \not\equiv 0 \pmod{3}$. 
Likewise $F(1) = 3 \cdot 3$ cannot be represented by Theorem \ref{thm:dio_main}, since $3$ cannot be represented 
by the binary form $\phi_0 [A] = \alpha^2 + \alpha \, \beta + 7  \, \beta^2$, see \cite{Conway} for more details. 
\end{remark}

\begin{remark}
Recall that the special linear group may be generated by its transvections \cite{Hahn}. In particular, 
$\mbox{SL}_2(\mathbb{Z})=\langle T, U \rangle$ where $T=\begin{pmatrix} 1 & 1 \\ 0 & 1 \end{pmatrix} $ 
and $U=\begin{pmatrix} 1 & 0 \\ 1 & 1 \end{pmatrix}$. It follows that
$$T^{i}=\begin{pmatrix} 1 & i \\ 0 & 1 \end{pmatrix} \quad \mbox{ and } \quad U^{i}=\begin{pmatrix} 1 & 0 \\ i & 1 \end{pmatrix}$$
for all $i \in \mathbb{Z}$.
\end{remark}

\begin{corollary} \label{cor:transvections}
For $F(n) = n^2 + b \, n + c$, 
\begin{equation}
f_m (x_1, x_2, \dots, x_{2i-1}, x_{2i}, \dots, x_m) = \phi_m [W^{x_m} \dots T^{x_{2i}} U^{x_{2i-1}} \dots T^{x_2} U^{x_1}]
\end{equation}
where $W=\left\{ \begin{array}{ll}U, & \mbox{if } m \mbox{ is odd} \\ T, & \mbox{if } m \mbox{ is even.} \end{array} \right.$
\end{corollary}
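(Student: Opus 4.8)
The plan is to obtain Corollary~\ref{cor:transvections} as an immediate consequence of Theorem~\ref{thm:dio_quad}, once one recognizes that in the monic case $a=1$ the additive recursion $A_{k+1}=A_k+x_{k+1}B_k$ defining $A_m$ is precisely left-multiplication by a transvection. First I would specialize Theorem~\ref{thm:dio_quad} to $a=1$, so that $f_m(x_1,\dots,x_m)=\phi_m[A_m]$ with $A_0=\begin{pmatrix}1&0\\0&1\end{pmatrix}$ and the increment matrices collapsing to
\[
B_k=\begin{pmatrix}\gamma_k&\delta_k\\0&0\end{pmatrix}\ (k\text{ odd}),
\qquad
B_k=\begin{pmatrix}0&0\\\alpha_k&\beta_k\end{pmatrix}\ (k\text{ even}).
\]
It then suffices to show that $A_m$ literally equals the stated product of transvections, since applying $\phi_m$ to that product then reproduces $f_m$ by Theorem~\ref{thm:dio_quad}.

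The key step is the pair of matrix identities, verified by direct multiplication using the explicit forms $T^{i}=\begin{pmatrix}1&i\\0&1\end{pmatrix}$ and $U^{i}=\begin{pmatrix}1&0\\i&1\end{pmatrix}$ recorded in the preceding remark:
\[
T^{x_{k+1}}A_k=A_k+x_{k+1}\begin{pmatrix}\gamma_k&\delta_k\\0&0\end{pmatrix},
\qquad
U^{x_{k+1}}A_k=A_k+x_{k+1}\begin{pmatrix}0&0\\\alpha_k&\beta_k\end{pmatrix}.
\]
Comparing the right-hand sides with the two forms of $B_k$ above shows that $A_{k+1}=T^{x_{k+1}}A_k$ when $k$ is odd (so $k+1$ is even) and $A_{k+1}=U^{x_{k+1}}A_k$ when $k$ is even (so $k+1$ is odd). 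In words, passing from $A_k$ to $A_{k+1}$ prepends a factor $T^{x_{k+1}}$ or $U^{x_{k+1}}$ according to the parity of the position $k+1$, with even positions receiving $T$ and odd positions receiving $U$.

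Finally I would anchor and unwind the recursion. The base case is $A_1=U^{x_1}$, which follows either by extending the recursion formally to $k=0$ (where $0$ is even, giving $B_0=\begin{pmatrix}0&0\\1&0\end{pmatrix}$ and hence $A_1=A_0+x_1B_0=U^{x_1}$) or by checking directly that $\phi_1[U^{x_1}]=x_1^2+b\,x_1+c=F(x_1)$. A routine induction using the two cases above then yields
\[
A_m=W^{x_m}\cdots T^{x_2}U^{x_1},
\]
where the leftmost factor $W^{x_m}$ is $U^{x_m}$ for odd $m$ and $T^{x_m}$ for even $m$, exactly matching the definition of $W$. Substituting into $f_m=\phi_m[A_m]$ gives the claimed identity. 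I expect the only point demanding care to be the parity bookkeeping—keeping straight that position $j$ carries $U$ for odd $j$ and $T$ for even $j$, and confirming the $A_1$ base case—rather than any genuine obstacle, since the entire content reduces to the elementary observation that the additive update $A_k\mapsto A_k+x_{k+1}B_k$ coincides with multiplication by an elementary matrix.
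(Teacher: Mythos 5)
Your proposal is correct and follows essentially the same route as the paper: both specialize Theorem \ref{thm:dio_quad} to $a=1$ and observe that the additive update $A_{k}=A_{k-1}+x_{k}B_{k-1}$ is exactly left-multiplication by $U^{x_k}$ for odd $k$ and $T^{x_k}$ for even $k$, then unwind the recursion. Your write-up merely makes the base case $A_1=U^{x_1}$ and the parity bookkeeping more explicit than the paper does.
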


\begin{proof}
From Theorem \ref{thm:dio_quad}, $f_m = \phi_m[A_m]$ where $A_0 = I$ and
\begin{equation}
A_{k} = 
  \begin{cases} 
    \begin{pmatrix} \alpha_{k-1} & \beta_{k-1} \\ \gamma_{k-1} + x_k \alpha_{k-1} & \delta_{k-1} + x_k \beta_{k-1} \end{pmatrix}
      = U^{x_k} A_{k-1} & \mbox{for odd } k \\
    \begin{pmatrix} \alpha_{k-1}+ x_{k} \gamma_{k-1} & \beta_{k-1}  + x_{k} \delta_{k-1} \\ \gamma_{k-1} & \delta_{k-1} \end{pmatrix} 
      = T^{x_k} A_{k-1} & \mbox{for even } k
  \end{cases}
\end{equation}
for each $1 \le k \le m$.
\end{proof}

It stands to reason that shifting a polynomial horizontally does not change the integer factorization of its values. In the case of quadratics,
the specific correspondence between a parabola and its shift is expressed by the following proposition.

\begin{figure}[b]
  \begin{center}
    \begin{overpic}[unit=1mm,width=4in]{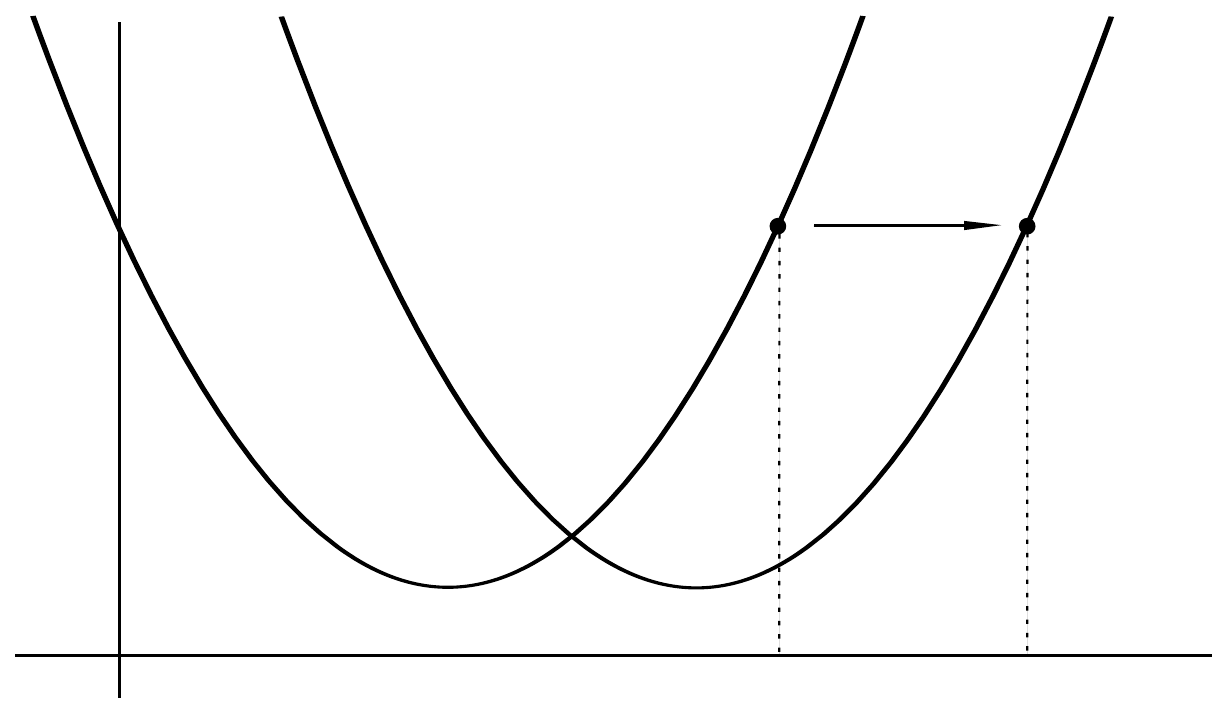}
      \put(73,55){\smaller $F$}
      \put(93.5,55){\smaller $G$}
      \put(73,42){\smaller $h$}
      \put(39,39){\smaller $\phi_{F,0}[A] \, \phi_{F,1}[A]$}
      \put(88,39){\smaller $\phi_{G,0}[B] \, \phi_{G,1}[B]$}
      \put(58,1){\smaller $\eta_F[A]$}
      \put(78,1){\smaller $\eta_G[B]$}
    \end{overpic}
  \end{center}
\medskip
\caption{Correspondence between integer factorizations for shifted parabolas.}
\label{smaller_values}
\end{figure}

\begin{proposition} \label{horz_shift}
Let $F(n) = a \, n^2 + b \, n + c$ and set $G(n) = F(n-h)$ for some $h \in \mathbb{Z}$. For each $A= \begin{pmatrix}
\alpha & \beta \\
\gamma & \delta
\end{pmatrix} \in \Gamma_a$ there is a corresponding $$B = A + h \,  \begin{pmatrix} 
a \, \beta & 0 \\
\delta & 0 \\
\end{pmatrix}$$
for which the following conditions hold:
\begin{enumerate} \renewcommand{\labelenumi}{(\roman{enumi})}
\item $B \in \Gamma_a$,
\item $\eta_G [B] = \eta_F [A] + h$,
\item $\phi_{G,0} [B] = \phi_{F,0} [A]$, and 
\item $\phi_{G,1} [B] = \phi_{F,1} [A]$.
\end{enumerate}
\end{proposition}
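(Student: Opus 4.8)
The plan is to treat all four claims as explicit polynomial identities in the entries $\alpha,\beta,\gamma,\delta$ and the shift parameter $h$, obtained by substituting the entries of $B$ into the $G$-versions of $\Delta$, $\eta$, $\phi_0$, and $\phi_1$. First I would record the coefficients of $G$: expanding $G(n)=F(n-h)=a(n-h)^2+b(n-h)+c$ gives $G(n)=a\,n^2+(b-2ah)\,n+(ah^2-bh+c)$, so $G$ has the same leading coefficient $a$ (hence $\Gamma_a$ is the common ambient set for both $F$ and $G$), while its linear and constant coefficients are $b'=b-2ah$ and $c'=ah^2-bh+c$. From the prescribed $B=A+h\begin{pmatrix} a\beta & 0 \\ \delta & 0\end{pmatrix}$ I read off the entries $\alpha_B=\alpha+ha\beta$, $\beta_B=\beta$, $\gamma_B=\gamma+h\delta$, and $\delta_B=\delta$; note that only the first column of $A$ is altered, which keeps the verifications short.

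For (i), substituting into $\Delta[B]=\alpha_B\delta_B-a\,\beta_B\gamma_B$ causes the two $h$-dependent terms $\pm ha\beta\delta$ to cancel, leaving $\Delta[B]=\alpha\delta-a\beta\gamma=\Delta_F[A]$; since $A\in\Gamma_a$ this equals $1$, so $B\in\Gamma_a$. For (ii) I would expand $\eta_G[B]=\alpha_B\gamma_B+b'\beta_B\gamma_B+c'\beta_B\delta_B$ and collect by the monomials $\alpha\gamma$, $\alpha\delta$, $\beta\gamma$, $\beta\delta$. The coefficient of $\beta\delta$ collapses to $c$ (the $ah^2$, $-2ah^2$, $+ah^2$ and $\pm bh$ contributions all cancel) and the coefficient of $\beta\gamma$ becomes $b-ah$, so that $\eta_G[B]=\eta_F[A]+h(\alpha\delta-a\beta\gamma)=\eta_F[A]+h\,\Delta_F[A]$; invoking $\Delta_F[A]=1$ then yields (ii). This is the one place where genuine membership in $\Gamma_a$, rather than a bare polynomial identity, is used.

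Claims (iii) and (iv) are pure polynomial identities in $h$ and need no hypothesis on $\Delta$. For (iii) I substitute into $\phi_{G,0}[B]=\alpha_B^2+b'\alpha_B\beta_B+a c'\beta_B^2$; the $\alpha\beta$ coefficient reduces to $b$ and the $\beta^2$ coefficient reduces to $ac$, with every term carrying a factor of $h$ cancelling in pairs, so $\phi_{G,0}[B]=\phi_{F,0}[A]$. Claim (iv) is handled identically using the second column, substituting into $\phi_{G,1}[B]=a\gamma_B^2+b'\gamma_B\delta_B+c'\delta_B^2$ and watching the $h$- and $h^2$-terms annihilate to leave $a\gamma^2+b\gamma\delta+c\delta^2=\phi_{F,1}[A]$. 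The only real obstacle is bookkeeping: one must track the $ah^2$ versus $2a^2h^2$ versus $a^2h^2$ style cancellations carefully, since that is exactly where a sign or coefficient slip would hide. As a consistency check against Theorem \ref{thm:dio_main}, conditions (ii)--(iv) together give $G(\eta_G[B])=G(\eta_F[A]+h)=F(\eta_F[A])=\phi_{F,0}[A]\,\phi_{F,1}[A]=\phi_{G,0}[B]\,\phi_{G,1}[B]$, so the correspondence $A\mapsto B$ transports the factorization presentation of $F(\eta_F[A])$ to the shifted presentation of $G$, exactly as expected from $G(n)=F(n-h)$.
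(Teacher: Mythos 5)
Your proposal is correct and follows essentially the same route as the paper: compute the shifted coefficients of $G$, substitute the entries of $B$ into $\Delta_G$, $\eta_G$, $\phi_{G,0}$, $\phi_{G,1}$, and verify the cancellations directly (the paper likewise only invokes $\Delta_F[A]=1$ in parts (i) and (ii)). The added observations — that (iii) and (iv) hold as bare polynomial identities, and the consistency check against Theorem \ref{thm:dio_main} — are accurate but not part of the paper's argument.
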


\begin{proof}
Let $B = \begin{pmatrix} \alpha + h \, a \beta & \beta \\ \gamma + h \, \delta & \delta \end{pmatrix}$ such that
$\alpha \delta - a \beta \gamma = 1$. Noting that 
$$G(n) = F(n-h) = a \, n^2 + (b - 2 a h) \, n + (c - b h + a h^2) :$$
\begin{align*}
\Delta_G [B] & = (\alpha + h \, a \beta) \, \delta - a \, \beta (\gamma + h \, \delta) \tag{i} \\
	& = \alpha \delta - a \, \beta \gamma = 1. \\[6pt]
\eta_G [B]  & = (\alpha + h \, a \beta) (\gamma + h \, \delta) + (b - 2 a h) \, \beta (\gamma + h \, \delta) + (c - b h + a h^2) \, \beta \delta \tag{ii} \\
	& = (\alpha \gamma + b \, \beta \gamma + c \, \beta \delta) + h \, (\alpha \delta - a \beta \gamma) \\
	& = \eta_F [A] + h . \\[6pt]
\phi_{G,1} [B] & = a \, (\gamma + h \, \delta)^2 + (b - 2 a h) (\gamma + h \, \delta) \delta + (c - b h + a h^2) \, \delta^2 \tag{iii} \\
	& = a \, \gamma^2 + b \, \gamma \delta + c \, \delta^2. \\[6pt]
\phi_{G,2} [B] & = (\alpha + h \, a \beta)^2 + (b - 2 a h) (\alpha + h \, a \beta) \beta + a (c - b h + a h^2) \, \beta^2 \tag{iv} \\
	& = \alpha^2 + b \, \alpha \beta + a c \, \beta^2 . \qedhere
\end{align*}

\end{proof}

\section{Lattice Points on the Conic Section $a X^2 + b X Y + c Y^2 + X - n Y =0$}  \label{sec:lattice_conic}	

Lastly, Theorem \ref{thm:conic_bijection} relates the set $\Gamma_a$ with the lattice point solutions of the conic sections
$a X^2 + b X Y+ c Y^2+ X -n Y = 0$. From Theorem \ref{thm:dio_main}, each $A_m \in \Gamma_a$ corresponds to an 
integer factorization presentation of a value of $F(n)=a n^2+b n +c$, i.e., the problem of finding lattice point solutions 
to these conic sections is equivalent to factoring the value of an associated quadratic polynomial.

\begin{theorem} \label{thm:conic_bijection}
For $a,b,c \in \mathbb{Z}$,  let
$$\mathcal{L}_a = \{(X,Y) \in \mathbb{Z}^2 \mid a X^2 + b X Y + c Y^2 + X - n Y = 0 \mbox{ for any } n \in \mathbb{N} \}$$
The map $\psi : \Gamma_a / \mathcal{K}_1 \bigcup \mathcal{K}_2 \bigcup \mathcal{K}_3 \rightarrow \mathcal{L}_a/\{(0,0),(-1,0),(1,0)\}$ defined by 
$$\begin{pmatrix} \alpha & \beta \\ \gamma & \delta \end{pmatrix} \mapsto 
\begin{pmatrix} \beta \gamma \\ \beta \delta \end{pmatrix}$$
is a bijection.
\end{theorem}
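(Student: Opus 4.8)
The plan is to verify directly that the map $\psi$ is well-defined on the stated quotients and then to construct an explicit inverse. First I would check well-definedness: given $A=\begin{pmatrix}\alpha&\beta\\\gamma&\delta\end{pmatrix}\in\Gamma_a$, so that $\alpha\delta-a\beta\gamma=1$, I must confirm that the image point $(X,Y)=(\beta\gamma,\beta\delta)$ actually lies in $\mathcal{L}_a$ for an appropriate $n$. Substituting $X=\beta\gamma$, $Y=\beta\delta$ into $aX^2+bXY+cY^2+X-nY$ and using the relation $\Delta[A]=1$ should force the left side to vanish precisely when $n=\eta[A]=\alpha\gamma+b\beta\gamma+c\beta\delta$; indeed $aX^2+bXY+cY^2 = \beta^2(a\gamma^2+b\gamma\delta+c\delta^2)=\beta^2\phi_1[A]$, while $X-nY=\beta\gamma-\eta[A]\beta\delta$, and factoring out $\beta$ and applying $\alpha\delta-a\beta\gamma=1$ should collapse the expression to zero. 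This is the computation I expect to carry the arithmetic content, but it is routine given Theorem \ref{thm:dio_main}.

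Next I would show $\psi$ descends to the quotients, i.e. that the excluded cosets map correctly. The matrices in $\mathcal{K}_1,\mathcal{K}_2,\mathcal{K}_3$ from (\ref{K1}) and (\ref{K2_and_K3}) all have either $\beta=0$ or $\gamma=0$ or $\delta=0$, so their images $(\beta\gamma,\beta\delta)$ land among $(0,0)$, $(-1,0)$, $(1,0)$; conversely any $A$ mapping into this excluded set should be forced into one of the $\mathcal{K}_i$. The delicate point is that $\psi$ is defined on cosets of $\Gamma_a$ by the union $\mathcal{K}_1\cup\mathcal{K}_2\cup\mathcal{K}_3$, which is not a subgroup, so "quotient" here must mean the set of equivalence classes under right (or two-sided) multiplication by these generators; I would pin down the precise equivalence relation and check that $\beta\gamma$ and $\beta\delta$ are invariants of it. Establishing injectivity then amounts to showing that two matrices with the same $(\beta\gamma,\beta\delta)$ differ by an element of the excluded set.

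For surjectivity and the inverse, I would reverse the formula. Given $(X,Y)\in\mathcal{L}_a$ with the defining relation $aX^2+bXY+cY^2+X-nY=0$, I want to recover $\alpha,\beta,\gamma,\delta$ with $\beta\gamma=X$, $\beta\delta=Y$, and $\alpha\delta-a\beta\gamma=1$. The relation $Y(n-cY)-X(aX+bY+1)=0$ should let me read off a compatible factorization: taking $\beta=\gcd$-type data from $X,Y$ and solving the Bézout condition $\alpha\delta-a\beta\gamma=1$ recovers a matrix, with the ambiguity in the lift corresponding exactly to the $\mathcal{K}_i$ cosets. The main obstacle I anticipate is precisely this inverse construction: exhibiting a canonical matrix representative for each lattice point and proving the lift is unique modulo $\mathcal{K}_1\cup\mathcal{K}_2\cup\mathcal{K}_3$, since the map $(\alpha,\beta,\gamma,\delta)\mapsto(\beta\gamma,\beta\delta)$ is two-to-one or worse before quotienting, and the divisibility/gcd bookkeeping needed to invert it while respecting $\Delta[A]=1$ is the genuinely nontrivial step. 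The independence of $\mathcal{L}_a$ from $b,c,n$, asserted in the introduction, should fall out once the inverse is built, since the recovered matrix data does not reference those parameters.
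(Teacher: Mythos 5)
Your forward computation and your plan to invert via $\beta=\gcd(X,Y)$, $\gamma=X/\gcd(X,Y)$, $\delta=Y/\gcd(X,Y)$ are essentially the paper's approach: it introduces $Z=\alpha\gamma$, derives $Z+bX+cY=n$ and $ZY=X+aX^2$ from $\Delta[A]=1$, eliminates $Z$ to get the conic, and then writes the explicit inverse $\psi^{-1}(X,Y)=\left(\begin{smallmatrix}\frac{\gcd(X,Y)}{Y}(1+aX) & \gcd(X,Y)\\ X/\gcd(X,Y) & Y/\gcd(X,Y)\end{smallmatrix}\right)$, with integrality of the top-left entry coming from $X(1+aX)=Y(n-bX-cY)$, exactly the divisibility step you flag as the nontrivial one. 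Note, though, that there is no B\'ezout freedom in the lift: once $\beta,\gamma,\delta$ are fixed, $\alpha$ is forced by $\alpha\delta=1+a\beta\gamma$, and the only content is showing $\delta\mid(1+aX)$.

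The genuine gap is in your reading of the ``quotient'' and hence in your injectivity plan. In this paper $\Gamma_a/\mathcal{K}_1\bigcup\mathcal{K}_2\bigcup\mathcal{K}_3$ means set-theoretic removal of the degenerate matrices (those realizing the trivial factorization), not a coset space, and likewise $\mathcal{L}_a/\{(0,0),(-1,0),(1,0)\}$ just deletes three points. Your proposed route --- fix an equivalence relation generated by the $\mathcal{K}_i$, check that $\beta\gamma$ and $\beta\delta$ are invariants, and prove injectivity by showing matrices with equal images ``differ by an excluded element'' --- fails at the invariance check: right multiplication by $\left(\begin{smallmatrix}1&0\\ s&1\end{smallmatrix}\right)\in\mathcal{K}_1$ sends $\beta\gamma$ to $\beta\gamma+s\,\beta\delta$ (and does not even preserve $\Gamma_a$ unless $a=1$); moreover, two non-excluded matrices ``differing by an excluded element'' would not be equal, which is what injectivity requires. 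The paper instead proves injectivity by verifying $\psi^{-1}\circ\psi=\mathrm{id}$ directly: $\Delta[A]=1$ forces $\gcd(\gamma,\delta)=1$, hence $\gcd(\beta\gamma,\beta\delta)=\beta$, and the inverse formula reproduces $A$. You would need to replace your injectivity paragraph with this argument. (As an aside, even that argument tacitly takes $\gcd(\beta\gamma,\beta\delta)=\beta$ rather than $|\beta|$; observe that $A$ and $-A$ always have the same image, so some sign convention is being suppressed.)
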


\begin{proof}
Fix $a,b,c \in \mathbb{Z}$ and consider $A= \begin{pmatrix} \alpha & \beta \\ \gamma & \delta \end{pmatrix} \in 
\Gamma_a$. Set $n = \eta [A]$, $X = \beta \gamma$, 
$Y = \beta \delta$, and $Z = \alpha \gamma$. Direct substitution shows that 
\begin{equation}
Z + b \, X + c \, Y = \alpha \gamma + b \, \beta \gamma + c \, \beta \delta = \eta [A]  = n.
\label{ellipse_1}
\end{equation}
Since $A \in \Gamma_a$, it follows that $\Delta[A]=1$ and $\beta \gamma \: (\alpha \delta-a \, \beta \gamma) = \beta \gamma (1)$, i.e.,
\begin{equation}
Z Y = X + a X^2.
\label{ellipse_2}
\end{equation}
Solving for $Z$ in (\ref{ellipse_1}) and substituting it into (\ref{ellipse_2}) shows that $(X,Y)$ is a solution to
\begin{equation}
a X^2 + b X Y + c Y^2 + X - n Y = 0.
\label{ellipse_3}
\end{equation}

Now consider the inverse map $\psi^{-1} : \mathcal{L}_a/\{(0,0),(-1,0),(1,0)\} \rightarrow \Gamma_a / \mathcal{K}_1 \bigcup \mathcal{K}_2 \bigcup \mathcal{K}_3$ defined by
\begin{equation}
\begin{pmatrix} X \\ Y \end{pmatrix} \mapsto \begin{pmatrix} 
\frac{\gcd(X,Y)}{Y}(1 + a X) & \gcd(X,Y) \\
\frac{X}{\gcd(X,Y)} & \frac{Y}{\gcd(X,Y)}
\end{pmatrix} \, .
\end{equation}
For each $L=(X,Y) \in \mathcal{L}_a$, $\Delta\left[ \psi^{-1}(L) \right] = 1$ and from (\ref{ellipse_3})
$$X (1 + a X) = Y (n - b X - c Y)$$
so $\frac{\gcd(X,Y)}{Y}(1+a X) \in \mathbb{Z}$. Hence $\psi^{-1} (L) \in \Gamma_a$.

We show that $\psi$ is injective by verifying that $\psi^{-1} \circ \psi (A) = A$ for each $A \in \Gamma_a$.
Indeed, since $\Delta[A]=1$ the $\gcd(\alpha \delta, a \, \beta \gamma)=1$ implying that $\gcd(\gamma,\delta)=1$, i.e., 
$\gcd(\beta \gamma, \beta \delta)=\beta$. Thus,
$$\psi^{-1} \psi [A] = \psi^{-1} \left[ \begin{pmatrix} \beta \gamma \\ \beta \delta  \end{pmatrix} \right]
   = \begin{pmatrix} 
\frac{\beta}{\beta \delta}(1 + a \beta \gamma) & \beta \\
\frac{\beta \gamma}{\beta} & \frac{\beta \delta}{\beta} \end{pmatrix} = A$$
since $\Delta[A]=1$ implies that $\alpha = \frac{1}{\delta}(1+ a \beta \gamma)$. 

Likewise, for each $(X,Y) \in \mathcal{L}_a$,
$$\psi \circ \psi^{-1} \left[ \begin{pmatrix} X \\ Y \end{pmatrix} \right]
= \psi \left[ \begin{pmatrix} 
\frac{G}{Y}(1 + a X) & G \\
\frac{X}{G} & \frac{Y}{G} \\
\end{pmatrix} \right] 
= \begin{pmatrix} X \\ Y \end{pmatrix}$$
meaning $\psi$ is surjective.
\end{proof}

The mapping $\psi:\mathcal{K}_1 \mapsto \begin{pmatrix} 0 \\ 0 \end{pmatrix}$ defined by 
$\psi \left[ \begin{pmatrix} \alpha & \beta \\ \gamma & \delta \end{pmatrix} \right] = \begin{pmatrix} \beta \gamma \\ \beta \delta \end{pmatrix}$ 
is well-defined and onto, but is not one-to-one.
Similarly, when $a=1$ or $-1$ the respective mappings $\psi:\mathcal{K}_2 \mapsto  \begin{pmatrix} -1 \\ 0 \end{pmatrix}$ and 
$\psi: \mathcal{K}_3 \mapsto \begin{pmatrix} 1 \\ 0 \end{pmatrix}$ are onto but not one-to-one. 
Therefore the image of $\psi$ under $\Gamma_a$ is $\mathcal{L}_a$.

\begin{figure}[h]
\begin{center}
  \includegraphics[width=5in]{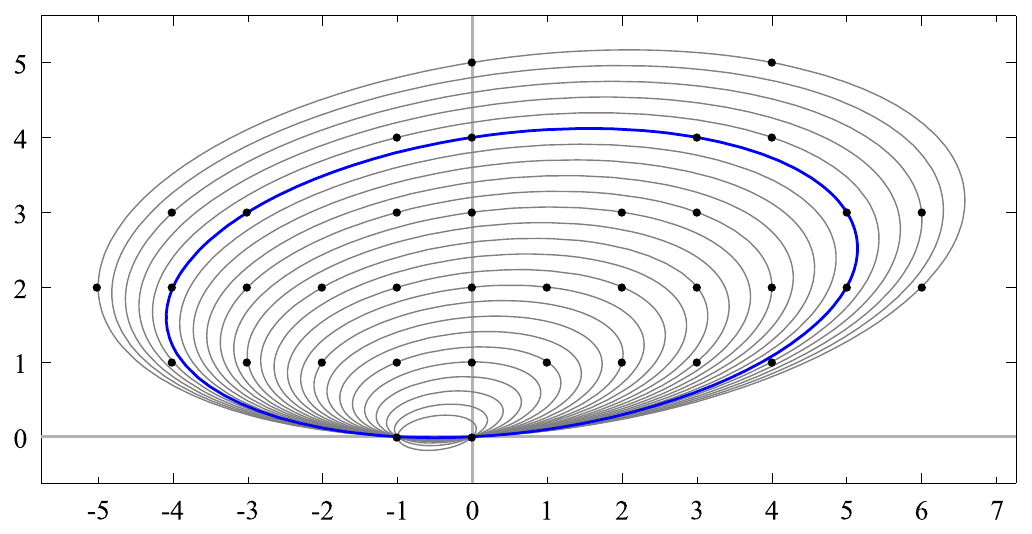}
\end{center}
\caption{Plot of $X^2 - X Y + 5 Y^2 + X - n Y = 0$ for $n=0,\dots,25$. The case $n=20$ is highlighted in blue and lattice points 
$(X,Y) \in \mathcal{L}_1$ intersecting the ellipses are indicated.}
\label{ellipse_plot}
\end{figure}

\begin{example}
Consider the Euler-like polynomial $F(n) = n^2 - n + 5$. It is easy to verify that $(X,Y)=(3,4)$ is a solution of
\begin{equation} \label{conic_example}
X^2 - X Y + 5 Y^2 + X - 20 Y = 0.
\end{equation}
By Theorem \ref{thm:conic_bijection}, the point $(3,4)$ corresponds to the element $A \in \Gamma_1$ given by
$$A = \psi^{-1} \left[ \begin{pmatrix} 3 \\ 4 \end{pmatrix} \right] =  \begin{pmatrix} 1 & 1\\ 3 & 4 \end{pmatrix}.$$
Thus $F(\eta[A]) = F(20) = 5 \cdot 77 = \phi_1 [A] \phi_2 [A]$. Similarly $(0,0)$, $(5,2)$, $(5,3)$, $(0,4)$, $(-3,3)$, $(-4,2)$ and $(-1,0)$ 
are also lattice point solutions (see Figure \ref{ellipse_plot}) to (\ref{conic_example}) corresponding to the integer factorizations 
$1 \cdot 385$, $11 \cdot 35$, $7 \cdot 55$, $77 \cdot 5$, $55 \cdot 7$, $35 \cdot 11$, and $385 \cdot 1$, respectively.
\end{example}

\begin{remark}
Gauss \cite{Mordell,Gauss} showed that the general binary quadratic Diophantine equation can be reduced to a special case of 
the Pell equation. In particular, (\ref{ellipse_3}) can be reduced to
\begin{equation}
U^2-(b^2-4 a c) V^2 = 4 a (a n^2+b n+c)
\end{equation}
where $U=(b^2-4 a c)Y+(b+2 a n)$ and $V=2 a X+b Y+1$ provided that $b^2-4 a c \not= 0$. The trivial factorization 
$F(n) = 1 \cdot F(n)$ corresponds to the solution $U=\pm(2 a n + b)$ and $V=\pm1$.
\end{remark}

\section{Acknowledgements}
I would like to thank John Quintanilla and Nata\u{s}a Jonoska for their useful discussions.


\begin{thebibliography}{99}  

\bibitem{Atkin} Atkin, A. O. L., Bernstein, D. J., ``Prime Sieves Using Binary Quadratic Forms," {\it Mathematics of Computation.} {\bf 7}:246 (2003), pp. 1023-1030.

\bibitem{Baker} Baker, A. ``Imaginary Quadratic Fields with Class Number Two," {\it Ann. Math.} {\bf 94} (1971), pp. 139-152.

\bibitem{Brahmagupta} Brahmagupta, {\it Br\^ahma-sphuta-siddh\^anta} (628).

\bibitem{Bouniakowsky} Bouniakowsky, V. ``Nouveaux th\'{e}or\`{e}mes relatifs \`{a} la distinction des nombres premiers et \`{a} la d\'{e}composition des entiers en facteurs," {\it M\'{e}m. Acad. Sc. St. P\'{e}tersbourg}, {\bf 6} (1857). pp. 305-329.

\bibitem{Conway} Conway, J. H. {\it The Sensual (Quadratic) Form}. Carus Mathematical Monographs {\bf 26}, Mathematical Association of America, Washington, DC (1997).

\bibitem{Crandall} Crandall, R., Pomerance, C., {\it Prime numbers. A computational perspective}, New York: Springer-Verlag (2001).

\bibitem{Dickson} Dickson, L. {\it History of the Theory of Numbers: Quadratic and Higher Forms}, Volume III. Chelsea Publishing Company, New York (1971).

\bibitem{Diophantus} Diophantus, {\it Arithmetica: Book III}, Problem 19 (c. 250).

\bibitem{Euler1} Euler, L. {\it Algebra}, St. Petersburg, 2 (1770). Ch.11 \S\S 173-180

\bibitem{Euler2} Euler, L. Extrait d'une lettre de M. Euler le p\`{e}re \`{a} M. Bernoulli concernant le memoire imprim\'{e} parmi ceux de 1771, {\it Nouveaux m\'{e}moirs de'Acad\'{e} des Sciences de Berlin 1772} (1774), p. 381

\bibitem{Fibonacci} Fibonacci, {\it Liber Quadratorum}, (1225).

\bibitem{Frobenius} Frobenius, `` \"{U}ber quadratische Formen, die viele Primzahlen darstellen," {\it Sitzungsber. d. Kgl. Preu{\ss}. Akad. Wiss. zu Berlin}, (1912), pp. 966-980. 
Reprinted in {\it Gesammelte Abhadlungen}, Vol. III, 573-587. Springer-Verlag, Berlin. (1968).

\bibitem{Gauss} Gauss, C. F., tr. Clarke, A. A., {\it Disquisitiones Arithmeticae}, Yale University Press, (1965).

\bibitem{Hahn} Hahn, A. J., O'Meara, O. T., {\it The Classical Groups and K-Theory}, Springer, New York, (1989).

\bibitem{HardyG} Hardy, G. H., Littlewood, J. E., ``Partitio numerorum III: On the expression of a number as a sum of primes," {\it Acta Math.}, {\bf 44}, (1923), pp. 1-70.

\bibitem{HardyK} Hardy, K., Muskat, J.B., Williams, K.S.,  ``A Deterministic Algorithm for Solving $n = f \, u^2 + g \, v^2$ in Coprime Integers $u$ and $v$,"  {\it Math. of Comp.} {\bf 55}:191 (1990), pp. 327-343.

\bibitem{Heegner} Heegner, K. ``Diophantische Analysis und Modulfunktionen," {\it Math. Z.} {\bf 56}  (1952), pp. 227-253.

\bibitem{Landau} Landau, E., ``Gel\"{o}ste und ungel\"{o}ste Probleme aus der Theorie der Primzahlverteilung und der Riemannschen Zetafunktion," {\it Proc. of the  Fifth Internat. Congr. of Math.}, Cambridge, Aug. 22-28, 1912, {\bf 1} (1913), pp. 93-108.

\bibitem{Legendre} Legendre, A. M. {\it Th\'eorie des nombres}, Libraire Scientifique, A. Herman, Paris (1798). 69-76; second ed. (1808); third ed. (1830), pp. 72-80.

\bibitem{LeLionnais} Le Lionnais, F. {\it Les Nombres Remarquables}, Paris: Hermann (1983), pp. 88-144.

\bibitem{Louboutin} Louboutin, S. ``Extensions du th\'{e}or\`{e}me de Frobenius-Rabinovitsch," {\it C. R. Acad. Sci. Paris}. {\bf 312} (1991), pp. 711-714.

\bibitem{Mollin} Mollin, R. A. {\it Quadratics}, CRC Press, Boca Raton, (1995).

\bibitem{Mordell} Mordell, L. {\it Diophantine Equations}, Academic Press. London (1969).

\bibitem{Pritchard} Pritchard, P. ``Linear prime-number sieves: a family tree," {\it Sci. Comput. Programming}. {\bf 9}:1 (1987), pp. 17-35.

\bibitem{Rabinowitz} Rabinowitz, G. ``Eindeutigkeit der Zerlegung in Primzahlfaktoren in quadratischen Zahlk\"{o}rpern,'' {\it Proc. Fifth Internat. Congress Math}. Cambridge {\bf 1} (1913), pp. 418-421.

\bibitem{Ribenboim} Ribenboim, P. {\it The Little Book of Bigger Primes}, Second Edition. Springer-Verlag. New York, NY (1991). ISBN 0-387-97508-X

\bibitem{Shanks} Shanks, D. ``A Sieve Method for Factoring Number of the Form $n^2+1$," {\it Math. Tables Aids Comput.} {\bf 13} (1959), pp. 78-86

\bibitem{Stark1} Stark, H. M. ``A Complete Determination of the Complex Quadratic Fields of Class Number One," {\it Michigan Math. J.} {\bf 14}, (1967), pp. 1-27.

\bibitem{Stark2} Stark, H. M. ``A Transcendence Theorem for Class Number Problems," {\it Ann. Math.} {\bf 94} (1971), pp. 153-173.

\bibitem{Weisstein} Weisstein, E. W. ``Prime-Generating Polynomial," From MathWorld--A Wolfram Web Resource. (2014) http://mathworld.wolfram.com/Prime-GeneratingPolynomial.html

\end{thebibliography}
\end{document}